\def\mytitle{Optimized State Space Grids for Abstractions}
\def\myname{
\ifx\shortnames\undefined Alexander \fi
Weber%
,
\ifx\shortnames\undefined Matthias \fi
Rungger%
~and
\ifx\shortnames\undefined Gunther \fi
Reissig%
}
\def\mykeywords{Discrete abstraction, symbolic control, automated
  synthesis, Djokovi{\'c}-London functional%
\ifCLASSOPTIONonecolumn;
MSC: Primary, 93B51;
Secondary, 93B52, 93C10, 93C30, 93C55, 93C57, 93C65%
\fi}
\let\confidentialstring=\relax
\let\headnote=\relax
\def\arxivVersion{}
\date{}
\let\confidentialstring=\relax
\def\emph#1{\textit{#1}}
\renewcommand\sout{\bgroup\markoverwith
{\textcolor{red}{\rule[.5ex]{2pt}{1pt}}}\ULon}
\newcommand\soutAW{\bgroup\markoverwith
{\textcolor{cyan}{\rule[.5ex]{2pt}{1pt}}}\ULon}
\let\ORGforeignlanguage\foreignlanguage
\def\foreignlanguage#1{\lowercase{\ORGforeignlanguage{#1}}}
\def\MakeUppercase#1{#1}%
\def\markboth#1#2{\def\leftmark{\@IEEEcompsoconly{\sffamily}\MakeUppercase{#1}}%
\def\rightmark{\@IEEEcompsoconly{\sffamily}\MakeUppercase{#2}}}
\def\href#1#2{\texttt{#2}}
\newcites{review}{References}
\begin{document}\bstctlcite{IEEEtranBSTCTL_AuthorNoDash}%

\makeatletter
\renewenvironment{proof}[1][\proofname]{\par
  \pushQED{\qed}%
  \normalfont \topsep6\p@\@plus6\p@\relax
  \trivlist
  \item[\hskip\labelsep
        \itshape
    #1\@addpunct{.}]\ignorespaces
}{%
  \popQED\endtrivlist\@endpefalse
}
\markboth{\def\shortnames{}\myname\hspace*{\fill}\def\\{ }\mytitle\ifx\arxivVersion\undefined\hspace*{\fill}\@date\else\relax\fi\hspace*{\fill}\ifx\DraftVersion\undefined\relax\else\hspace*{\fill}svn: \svnrev\fi\hspace*{\fill}}%
{\def\shortnames{}\myname\hspace*{\fill}\def\\{ }\mytitle\ifx\arxivVersion\undefined\hspace*{\fill}\@date\else\relax\fi\hspace*{\fill}\ifx\DraftVersion\undefined\relax\else\hspace*{\fill}svn: \svnrev\fi\hspace*{\fill}}%
\makeatother

\newcommand{\R}{\mathbb{R}}
\newcommand{\N}{\mathbb{N}}
\newcommand{\Z}{\mathbb{Z}}
\newcommand{\Q}{\mathbb{Q}}

\title{%
\mytitle}

\author{\myname%
\thanks{%
\ifx\arxivVersion\undefined
Corresponding author: A. Weber.\newline
\else\relax
\fi
G. Reissig and A. Weber are with the
University of the Federal Armed Forces Munich,
Dept. Aerospace Eng.,
Chair of Control Eng. (LRT-15),
D-85577 Neubiberg (Munich),
Germany,
\ifx\DraftVersion\undefined%
\url{http://www.reiszig.de/gunther/}, \url{a.weber@unibw.de}%
\else%
\url{no-email-gunther}, \url{A.Weber@unibw.de}%
\fi%
}%
\thanks{%
M. Rungger is with the Hybrid Control Systems Group at the Department of
Electrical and Computer Engineering at the Technical University of Munich, Germany, \url{matthias.rungger@tum.de}%
}%
\thanks{This work has been supported by the German Research Foundation (DFG) under grants no. RE 1249/3-2 and RE \mbox{1249/4-1}.
\ifx\arxivVersion\undefined\relax\else
This is the accepted version of a paper published in \emph{IEEE Trans. Automat. Control}, vol.~62, no.~11, pp.~5816-5821, 2017, \href{http://dx.doi.org/10.1109/TAC.2016.2642794}{DOI:10.1109/TAC.2016.2642794}.
\fi
}%
\ifCLASSOPTIONonecolumn%
\thanks{\confidentialstring{} \headnote}%
\fi%
}

\maketitle

\begin{abstract}
\noindent
The practical impact of abstraction-based controller synthesis methods
is currently limited by the immense computational effort for obtaining
abstractions.
In this note we focus on a recently proposed method to compute
abstractions whose state space is a cover of the state space of the
plant by congruent hyper-intervals. 
The problem of how to choose the size
of the hyper-intervals so as to obtain 
computable and useful abstractions is unsolved.
This note provides a twofold contribution towards a solution.
Firstly, we present a functional to predict the computational
effort for the abstraction to be computed. 
Secondly, we propose a method for choosing the aspect ratio of the hyper-intervals
when their volume is fixed. More precisely,
we propose to choose the aspect ratio
so as to minimize a predicted number of transitions of
the abstraction to be computed, in order to reduce the computational
effort.
To this end, we derive a functional to predict the number of
transitions in dependence of the aspect ratio. The functional is to be
minimized subject to suitable constraints. We characterize the unique
solvability of the respective optimization problem and prove that it
transforms, under appropriate assumptions, into an equivalent convex
problem with strictly convex objective. The latter problem can then be
globally solved using standard numerical methods.
We demonstrate our approach on an example.\looseness=-1
\end{abstract}

 \begin{IEEEkeywords}
 \noindent
 \mykeywords
 \end{IEEEkeywords}
\section{Introduction}
\label{s:intro}
The concept of abstraction-based controller synthesis
is a fully automated procedure to design feedback controllers
that enforce predefined, possibly complex, specifications
on nonlinear control systems \cite{i14sym,Tabuada09}. 
The procedure comprises three steps \cite{Tabuada09}. 
The first step is to transfer the actual control system (``plant") 
together with the predefined specification
to an auxiliary control system, 
known as \begriff{abstraction} or \begriff{symbolic model},
and an auxiliary specification.
In the second step, the auxiliary control problem 
is solved. 
The last step is to refine the obtained controller (``abstract controller") 
to a controller for the actual control problem. 
The practical impact of the approach is currently limited by
the immense computational effort for the first step, i.e., for
obtaining abstractions.

Various methods to reduce the computational effort of 
this procedure exist in literature, e.g. 
\cite{
RunggerStursberg12,
RunggerMazoTabuada13,
PolaBorriDiBenedetto12,
GirardGoesslerMouelhi16,
TazakiImura09,
MouelhiGirardGossler13,
LeCorroncGirardGoessler13,
ZamaniTkachevAbate14}.
The methods in 
\cite{
RunggerStursberg12,
RunggerMazoTabuada13,
PolaBorriDiBenedetto12,
GirardGoesslerMouelhi16} 
merge the first and second step in previous scheme 
in order to compute the abstraction only partially.
The methods in \cite{TazakiImura09,MouelhiGirardGossler13,GirardGoesslerMouelhi16} 
locally refine symbolic models to reduce the number of required abstract states. 
In \cite{LeCorroncGirardGoessler13,ZamaniTkachevAbate14} 
the state space of the plant is not discretized but
finite sequences of inputs are used as abstract states. 

This paper is the first to establish
a reduction method for abstractions that are based on 
\emph{feedback refinement relations} \cite{i14symc,i14sym}.
Moreover, for the first time, a functional is presented that 
\emph{predicts} the required computational resources for the 
abstraction to be computed. 

Abstractions based on feedback refinement relations 
can be constructed for plants whose dynamics are 
governed by nonlinear differential equations 
subject to perturbations. 
Moreover, in contrast to other system relations, 
the induced controllers for 
the actual control problem merely consist
of the abstract controller and a static quantizer \cite{i14sym}. 

The scheme for computing such an abstraction is as follows.
First, the $n$-dimensional real state space of the plant is
discretized by means of a cover to obtain the states of the
abstraction, where the vast majority of the elements of the cover are
translated copies of the hyper-interval 
\begin{equation}
\label{e:hyperinterval}
\intcc{-\tfrac{\eta_1}{2},\tfrac{\eta_1}{2}} \times \ldots \times \intcc{-\tfrac{\eta_n}{2},\tfrac{\eta_n}{2}},
\quad \eta_1,\ldots,\eta_n>0,
\end{equation}
which are aligned on a uniform grid.
Second, attainable sets of the sets in the cover 
are over-approximated by hyper-intervals
to obtain the transitions in the abstraction. 

The goal of this work is to provide a heuristic for 
choosing $\eta_1,\ldots, \eta_n$ in \ref{e:hyperinterval} so as 
to reduce the memory and time consumption when computing abstractions
for which the volume $\eta_1\!\cdot\!\eta_2\ldots\eta_n$ of \ref{e:hyperinterval}
is predefined. The key idea here is the minimization of 
the expected number of transitions. 
As a first step towards this goal,
we propose to use the functional 
\begin{equation}
\label{e:DLFunction}
E(\eta) = \prod_{i=1}^n\nolimits \frac{1}{\eta_i}\left (p_i+ \sum_{j=1}^n\nolimits A_{i,j}\eta_j \right )
\end{equation}
to estimate the number of transitions per abstract state and input symbol in dependence 
of $\eta = (\eta_1,\ldots, \eta_n)$ in \ref{e:hyperinterval}.
Here, the nonnegative $n\times n$-matrix $A$ and the $n$-dimensional nonnegative
vector $p$
depend on the particular plant dynamics and on bounds on disturbances.

In the next step we study
the minimization of \ref{e:DLFunction} subject to
a constraint that
prescribes the volume of \ref{e:hyperinterval}.
For this, in general, non-convex optimization problem, 
we characterize the existence of a unique solution.
To this end, we eliminate non-convexity by suitably
transforming \ref{e:DLFunction} and show that under appropriate assumptions the auxiliary
optimization problem has strictly convex objective and can be globally
solved by standard numerical methods. These results then allow us to
establish the requested heuristic.
We finally demonstrate our approach on an example.

Our results on the functional in \ref{e:DLFunction} recover and extend
previous results for the special case $p=0$, which plays a part in
diagonal scaling of nonnegative matrices into doubly stochastic form
\cite{Djokovic70,London71}.
Some of our results have been announced in \cite{i15gridc}.

\section{Preliminaries}
\label{s:prelims}
\subsubsection{Notation}
$\mathbb{R}$, $\mathbb{R}_+$ and $\mathbb{Z}$ denote the sets of
real numbers, nonnegative real numbers, and integers,
respectively. 
$\intcc{a,b}$, $\intoo{a,b}$,
$\intco{a,b}$, and $\intoc{a,b}$
denote closed, open and half-open, respectively,
intervals with end points $a$ and $b$.
$\intcc{a;b}$, $\intoo{a;b}$,
$\intco{a;b}$, and $\intoc{a;b}$ stand for discrete intervals, e.g.
$\intcc{a;b} = \intcc{a,b} \cap \mathbb{Z}$,
$\intco{1;4} = \{ 1,2,3\}$, and
$\intco{0;0} = \emptyset$.
In $\mathbb{R}^n$, the relations $<$, $\leq$, $\geq$, $>$ are defined
component-wise, e.g.,
$a < b$ iff $a_i < b_i$ for all $i \in \intcc{1;n}$. For $x \in \R^n$ we define $|x| = (|x_1|,\ldots,|x_n|)$. 
For $a, b \in (\mathbb{R} \cup \{\infty,-\infty\})^n$, $a \leq b$, the closed
hyper-interval $\segcc{a,b}$ is defined by
$
\segcc{a,b}
=
\mathbb{R}^n
\cap
\left(
\intcc{a_1,b_1} \times \cdots \times \intcc{a_n,b_n}
\right)
$.
$\innerProd{\cdot}{\cdot}$ stands for the standard Euclidean inner product, 
i.e., $\innerProd{x}{y} = \sum_{i=1}^n x_iy_i$. 
$\| \cdot \|_p$ stands for the usual $p$-norm, $p \in
\intcc{1,\infty}$.\looseness=-1

$f \colon A \rightrightarrows B$ denotes a set-valued map of
$A$ into $B$, whereas $f \colon A \to B$ denotes an ordinary map; see \cite{RockafellarWets09}.
If $f$ is
set-valued, then $f$ is \emph{strict} %and \emph{single-valued}
if
$f(a) \not= \emptyset$
for every $a$.
We identify set-valued maps
$f \colon A \rightrightarrows B$ with binary relations on
$A \times B$, i.e., $(a,b) \in f$ iff $b \in f(a)$.
$f \circ g$ denotes the composition of $f$ and $g$, $(f \circ g)(x) = f(g(x))$. 

We denote the vector $(1,\ldots,1) \in \R^k$  by $\mathbf{1}$ and the identity map
$X \to X \colon x \mapsto x$ by $\id$. 
The dimension $k$ and the domain of definition $X$ respectively 
will always be clear from the context. 
A \begriff{cover} of a set $X$ is a set of subsets of $X$ whose union equals $X$. 
\subsubsection{nonnegative matrices}
A matrix $A$ is \begriff{nonnegative} if 
$A \in \R^{n\times n}_+$ and it
is \begriff{essentially nonnegative} if 
$A \in \R^{n\times n}$ and $A_{i,j}\geq 0$ whenever $i \neq j$. 
A matrix $A \in \R^{n\times n}$ is \begriff{irreducible} if 
for any $r,s \in \intcc{1;n}$, $r \neq s$ 
there exist distinct indices $i_1,\ldots,i_m \in \intcc{1;n}$ 
satisfying $i_1 = r$, $i_m = s$ and $A_{i_{k},i_{k+1}}>0$
for all $k \in \intco{1;m}$. Otherwise, $A$ is \begriff{reducible}.
\section{Computation of abstractions}
\label{s:abstractions}

This section gives a brief exposition of the method to 
compute abstractions from \cite{i14sym}.
We consider control systems
governed by nonlinear differential inclusions of the form 
\begin{equation}
\label{e:System:c-time}
\dot \xi(t) \in f(\xi(t),u) + \segcc{-w,w},
\end{equation}
where 
$f \colon \mathbb{R}^n \times \bar U \to \mathbb{R}^n$,
$\bar U\subseteq \mathbb{R}^m$ is nonempty,
$f(\cdot,u)$ is locally Lipschitz for all $u \in \bar U$,
and
$w \in \R^n_+$ is a component-wise bound on perturbations 
to the dynamics of the control system.
For $\tau > 0$ a \begriff{solution of \ref{e:System:c-time} on $\intcc{0,\tau}$ 
with (constant) input $u \in \bar U$} is 
an absolutely continuous function $\xi \colon \intcc{0,\tau} \to \R^n$ 
that fulfills \ref{e:System:c-time} 
for almost every $t\in \intcc{0,\tau}$ \cite{Filippov88}.

We formalize 
sampled versions of control systems \ref{e:System:c-time} 
in a notion of system as given below.
\begin{definition}
\label{d:system}
A \begriff{system} is a triple $(X,U,F)$, 
where $X$ and $U$ are nonempty sets and 
$F \colon X \times U \rightrightarrows X$.
\end{definition}
We call the sets $X$ and $U$ the \begriff{state} 
and \begriff{input alphabet}, respectively. 
The map $F$ is called the \begriff{transition function}.
\begin{definition}
\label{def:SampledSystem}
Let $S=(X,U,F)$
be a system and
\mbox{$\tau > 0$.}
We say that $S$ is the \begriff{sampled system} associated with the
control system \ref{e:System:c-time} and the \begriff{sampling time}
$\tau$, if $X=\mathbb{R}^n$, $U=\bar U$ and
the following holds:
$x_1 \in F(x_0,u)$ iff there exists a solution $\xi$ of \ref{e:System:c-time} on $\intcc{0,\tau}$ with input $u$ so that $\xi(0) = x_0$
and $\xi(\tau) = x_1$.
\end{definition}
We relate two systems to each other by feedback refinement relations. 
We introduce this concept as follows. 

For a system $S = (X,U,F)$ and $x \in X$ let 
\begin{equation*}
U_{S}(x) = \{ u \in U \ | \ F(x,u) \neq \emptyset \}.
\end{equation*}
\begin{definition}
\label{d:FRR}
Let $S_i = (X_i,U_i,F_i)$, $i \in \{1,2\}$ be two systems such that 
$U_2 \subseteq U_1$. 
A \begriff{feedback refinement relation from $S_1$ to $S_2$} is 
a strict relation $Q \subseteq X_1 \times X_2$ satisfying 
\begin{enumerate}
\item \label{d:FRR:in} $U_{S_2}(x_2) \subseteq U_{S_1}(x_1)$,
\item \label{d:FRR:dyn} $u \in U_{S_2}(x_2) \ \Longrightarrow \ Q(F_1 (x_1,u))\subseteq F_2(x_2,u)$
\end{enumerate}
for all $(x_1,x_2) \in Q$.
\end{definition}
We write $S_1 \preccurlyeq_Q S_2$ if
$Q$ is a feedback refinement relation from $S_1$ to $S_2$. 
If $S_1 \preccurlyeq_Q S_2$ we say that $S_2$ is an \begriff{abstraction} for $S_1$.\looseness=-1

A feedback refinement relation $Q$ from $S_1$ to $S_2$ associates 
states of $S_1$ with states of $S_2$ and imposes conditions 
on the images of the transition functions at associated states.
The relation $Q$ also serves as an interface to be added to the
abstract controller in order to refine it into a controller for the
actual plant.
We refer the reader to
\cite{i14sym} for a formal definition of the closed loop, the details
of the synthesis procedure, and in particular, for
a proof of the fact that the refined controller actually solves the control
problem for the plant $S_1$.
The framework also allows for bounded measurement errors
$P \colon \R^n \rightrightarrows \R^n$ of the form
\begin{IEEEeqnarray}{c}\label{e:cSys:meas}
P(x)=x+\segcc{-z,z}
\end{IEEEeqnarray}
for some $z \in \R^n_+$
which are taken care of
by simply requiring $S_1 \preccurlyeq_{Q \circ P} S_2$ rather than
$S_1 \preccurlyeq_{Q} S_2$
\cite[Sec. VI.B]{i14sym}. 

In what follows, we discuss the computation of abstractions
$S_2 = (X_2,F_2,U_2)$ satisfying $S_1 \preccurlyeq_{Q \circ P} S_2$,
for a sampled system $S_1$ associated with \ref{e:System:c-time},
where we restrict our attention to abstractions whose state alphabet
$X_2$ is a cover of the state alphabet of $S_1$.
The elements of $X_2$ are nonempty, closed hyper-intervals, which we call \begriff{cells}. 
We divide $X_2$ into two subsets, which we interpret as ``real'' quantizer symbols 
and 
overflow symbols, respectively. 
See~\cite[Sec. III.A]{i11abs}. 
We let the former subset, subsequently denoted by $\bar X_2$, consist of congruent cells that are
aligned on the uniform grid 
\begin{IEEEeqnarray}{c}
\label{e:grid}
\eta\Z^n=\{c\in \R^n\mid \exists_{k\in\Z^n}\forall_{i\in\intcc{1;n}}\; c_i=k_i\eta_i\}
\end{IEEEeqnarray}
with \begriff{grid parameter} $\eta\in(\R_+\setminus\{0\})^n$, i.e.,
\begin{IEEEeqnarray}{c}\label{e:abs:ss}
x_2\in \bar X_2\implies \exists_{c\in \eta\Z^n}\;x_2=c+\segcc{-\eta/2,\eta/2}.
\IEEEeqnarraynumspace
\end{IEEEeqnarray}
The computation of the map $F_2$ on $\bar X_2 \times U_2$ will be based on 
overapproximating the attainable sets of the cells in $\bar X_2$ under the flow of \ref{e:System:c-time}.
For that purpose, we will define \begriff{growth bounds} below.
Growth bounds have been introduced in 
\cite[Sec.~VIII.A]{i14sym}, where their important features are also discussed.\par
We denote by $\varphi$ the general solution of the unperturbed control system 
associated with \ref{e:System:c-time}.
More formally, if $x_0 \in \R^n$, $u \in \bar U$, then $\varphi(t, x_0, u)$ is the value at time $t$ 
of the solution of the initial value problem $\dot x = f(x,u)$, $x(0) = x_0$. %\cite{Hartman02}. 
\begin{definition}
Let $\tau > 0$, $K \subseteq \R^n$, and $\bar U' \subseteq \bar U$. A map $\beta \colon \mathbb{R}^n_+ \times \bar U' \to \mathbb{R}^n_+$ is a \begriff{growth bound} on $K$, $\bar U'$ associated with $\tau$ and \ref{e:System:c-time} if
\begin{enumerate}
\item
\label{def:GrowthBound:beta}
$\beta(r,u) \geq \beta(r',u)$ whenever $r \geq r'$ and $u \in  \bar U'$,
\item
\label{def:GrowthBound:bound}
$\intcc{0,\tau} \times K \times \bar U' \subseteq \dom \varphi$ and if $\xi$ is a solution of \ref{e:System:c-time} on 
$\intcc{0,\tau}$ with input $u \in \bar U'$ and $\xi(0),p \in K$
then
\mbox{$| \xi(\tau) - \varphi(\tau,p,u) | \leq \beta( | \xi(0) - p |, u)$}.
\end{enumerate}
\end{definition}
Explicit growth bounds of the form 
\begin{IEEEeqnarray}{c}\label{e:growthbound}
\beta(r,u)=\e^{L(u) \tau} r + v(u),
\end{IEEEeqnarray}
where $v(u) \in \mathbb{R}_{+}^n$ and the matrix
$L(u) \in \mathbb{R}^{n\times n}$ is  essentially nonnegative, can be
computed under mild assumptions
\cite{i14sym}.
The next result, which extends \cite[Th.~VIII.4]{i14sym} to the case
of multiple growth bounds, is the key to the computation of abstractions.
\begin{theorem}
\label{t:abstraction}
Let $S_1=(X_1,U_1,F_1)$ be the sampled system associated with~\ref{e:System:c-time}
and sampling time $\tau>0$, and let
$P$ be given
by~\ref{e:cSys:meas}.
Let $S_2=(X_2,U_2,F_2)$ be a system, where 
$X_2$ is a cover of $X_1$ by nonempty, closed hyper-intervals and
$U_2\subseteq U_1$. Consider a subset $\bar X_2\subseteq X_2$ that
satisfies~\ref{e:abs:ss}, and for any $x_2 \in \bar X_2$
let $\beta_{x_2}$ be a growth bound on $P(x_2)$, 
$U_2$ associated with $\tau$ and \ref{e:System:c-time}.
Suppose that $F_2$ is given by
\begin{enumerate}
  \item \label{d:abstraction:rhs:1}
  $F_2(x_2,u)=\emptyset$ whenever $x_2\in X_2\setminus \bar X_2$, $u \in U_2$, and
  \item \label{d:abstraction:rhs:2}
  for $x_2\in \bar X_2$, $x'_2\in X_2$ and $u\in U_2$ we have
  \begin{IEEEeqnarray}{c}\label{e:abs:tf}
   x'_2\in F_2(x_2,u) \iff  \left( c + \segcc{-r',r'} \right) \cap
    P(x'_2)\neq\emptyset,
    \IEEEeqnarraynumspace
  \end{IEEEeqnarray}
with $r'=\beta_{x_2}(\eta/2+z,u)$, $x_2=\bar c+\segcc{-\eta/2,\eta/2}$, and
\begin{equation}
\label{e:t:abstraction}
c = \varphi(\tau,\bar c , u).
\end{equation}
\end{enumerate}
Then we have $S_1\preccurlyeq_{Q\circ P}S_2$, with $Q\subseteq
X_1\times X_2$ defined by
$(x_1,x_2)\in Q$ iff $x_1\in x_2$.
\end{theorem}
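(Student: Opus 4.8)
The plan is to verify directly that the relation $Q$ defined by $(x_1,x_2)\in Q \iff x_1\in x_2$ satisfies the two conditions in Definition~\ref{d:FRR} for the pair $S_1 \preccurlyeq_{Q\circ P} S_2$, i.e.\ that $Q\circ P$ is a feedback refinement relation from $S_1$ to $S_2$. First I would check that $Q\circ P$ is strict: since $X_2$ is a cover of $X_1=\mathbb{R}^n$, every $x_1$ lies in some cell $x_2\in X_2$, and because $P(x_1)=x_1+\segcc{-z,z}\ni x_1$, we get $x_2\in Q(P(x_1))$, so $(Q\circ P)(x_1)\neq\emptyset$. Here one must be slightly careful that $x_2$ can be taken in $\bar X_2$ or at least that the cover property together with the overflow cells still gives nonemptiness — but strictness only needs \emph{some} cell, so the cover property alone suffices. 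For the remainder, fix $(x_1,x_2)\in Q\circ P$, so there is $\hat x\in P(x_1)$ with $\hat x\in x_2$; if $x_2\in X_2\setminus\bar X_2$ then $U_{S_2}(x_2)=\emptyset$ by item~\ref{d:abstraction:rhs:1} and both conditions of Definition~\ref{d:FRR} hold vacuously, so assume $x_2\in\bar X_2$ with $x_2=\bar c+\segcc{-\eta/2,\eta/2}$.

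Next I would address condition~\ref{d:FRR:in}, $U_{S_2}(x_2)\subseteq U_{S_1}(x_1)$. Since $\beta_{x_2}$ is a growth bound on $P(x_2)$ and $U_2$, item~\ref{def:GrowthBound:bound} requires $\intcc{0,\tau}\times P(x_2)\times U_2\subseteq\dom\varphi$, in particular solutions of the unperturbed system with input $u\in U_2$ starting in $P(x_2)$ exist on $\intcc{0,\tau}$. One then has to pass from existence of unperturbed solutions to existence of perturbed solutions of \ref{e:System:c-time}: this is the standard argument (using local Lipschitz continuity of $f(\cdot,u)$ and the bounded perturbation $\segcc{-w,w}$, together with the growth bound controlling the spread) showing that every $x_1$ with $P(x_1)\cap x_2\neq\emptyset$, hence $x_1\in P(x_2)$ because $P$ is symmetric, also admits a solution on $\intcc{0,\tau}$, so $u\in U_{S_1}(x_1)$. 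This is essentially the content of the proof of \cite[Th.~VIII.4]{i14sym}; the only new ingredient is that the growth bound now depends on $x_2$, which causes no difficulty since we work with one fixed $x_2$ at a time.

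The core step is condition~\ref{d:FRR:dyn}: for $u\in U_{S_2}(x_2)$ show $(Q\circ P)(F_1(x_1,u))\subseteq F_2(x_2,u)$. Take $x_1'\in F_1(x_1,u)$, so there is a solution $\xi$ of \ref{e:System:c-time} on $\intcc{0,\tau}$ with input $u$, $\xi(0)=x_1$, $\xi(\tau)=x_1'$, and take any $x_2'\in X_2$ with $(Q\circ P)$ relating $x_1'$ to $x_2'$, i.e.\ $P(x_1')\cap x_2'\neq\emptyset$. We must show $x_2'\in F_2(x_2,u)$, which by \ref{e:abs:tf} means $(c+\segcc{-r',r'})\cap P(x_2')\neq\emptyset$ with $c=\varphi(\tau,\bar c,u)$ and $r'=\beta_{x_2}(\eta/2+z,u)$. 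The key estimate: since $\hat x\in P(x_1)\cap x_2$ we have $|x_1-\bar c|\le|x_1-\hat x|+|\hat x-\bar c|\le z+\eta/2$ (componentwise, as $\hat x\in x_1+\segcc{-z,z}$ and $\hat x\in\bar c+\segcc{-\eta/2,\eta/2}$); since $\xi(0)=x_1$ and $\bar c$ both lie in $P(x_2)$, the defining property of the growth bound gives $|\xi(\tau)-\varphi(\tau,\bar c,u)|=|x_1'-c|\le\beta_{x_2}(|x_1-\bar c|,u)\le\beta_{x_2}(\eta/2+z,u)=r'$ using monotonicity (item~\ref{def:GrowthBound:beta}). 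Hence $x_1'\in c+\segcc{-r',r'}$. Finally pick $\hat y\in P(x_1')\cap x_2'$; then $\hat y\in x_1'+\segcc{-z,z}$, so $x_1'\in\hat y+\segcc{-z,z}=P(\hat y)$, and since $\hat y\in x_2'$ we get $x_1'\in P(x_2')$... more directly, $\hat y\in P(x_1')$ and $x_1'\in c+\segcc{-r',r'}$ together with $\hat y\in x_2'$ shows that $\hat y$ — or rather a suitably chosen point — witnesses $(c+\segcc{-r',r'})\cap P(x_2')\neq\emptyset$; one checks $x_1'\in(c+\segcc{-r',r'})$ and $x_1'\in P(x_2')$ since $P(x_2')=x_2'+\segcc{-z,z}\ni \hat y+\segcc{-z,z}\ni x_1'$. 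Thus $x_1'$ itself lies in the intersection, giving $x_2'\in F_2(x_2,u)$ by \ref{e:abs:tf}.

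I expect the main obstacle to be the bookkeeping in condition~\ref{d:FRR:in}: translating ``every $x_1$ related to $x_2$ through $Q\circ P$ admits a perturbed solution on $\intcc{0,\tau}$ for each $u\in U_{S_2}(x_2)$'' requires invoking the existence-of-solutions argument underlying growth bounds (the fact that $\intcc{0,\tau}\times K\times\bar U'\subseteq\dom\varphi$ is part of the growth-bound definition, but one still must lift from the unperturbed $\varphi$ to solutions of the differential inclusion), and carefully handling the overflow cells $X_2\setminus\bar X_2$ so that nothing is claimed about their dynamics. The dynamics condition~\ref{d:FRR:dyn}, by contrast, is a clean chain of componentwise triangle inequalities plus monotonicity of $\beta_{x_2}$, essentially identical to the single-growth-bound case in \cite{i14sym} once $x_2$ is fixed.
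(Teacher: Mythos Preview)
Your proposal is correct and follows essentially the same route as the paper's proof: verify strictness of $Q\circ P$ from the cover property, dispose of overflow cells via $U_{S_2}(x_2)=\emptyset$, and for $x_2\in\bar X_2$ establish \ref{d:FRR:in} and \ref{d:FRR:dyn} by the containment $x_1\in P(x_2)$ together with the growth-bound inequality and monotonicity. Your treatment of \ref{d:FRR:dyn} matches the paper's almost line for line; your explicit componentwise triangle inequality $|x_1-\bar c|\le \eta/2+z$ and the observation $x_1'\in P(x_2')$ are exactly the steps the paper compresses into the phrases ``from $x_1\in P(x_2)$ and the properties of $\beta_{x_2}$'' and ``$F_1(x_1,u)\cap P(x_2')\neq\emptyset$''.

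One remark: you are over-anticipating the obstacle in condition~\ref{d:FRR:in}. There is no need to ``lift'' from unperturbed to perturbed solutions via Lipschitz arguments or spread control. Since $0\in\segcc{-w,w}$, the unperturbed trajectory $t\mapsto\varphi(t,x_1,u)$ is itself a solution of the differential inclusion \ref{e:System:c-time}, so $\varphi(\tau,x_1,u)\in F_1(x_1,u)$ and $F_1(x_1,u)\neq\emptyset$ follows immediately from $\intcc{0,\tau}\times P(x_2)\times U_2\subseteq\dom\varphi$ and $x_1\in P(x_2)$. The paper's one-line ``by our assumption on the growth bound $\beta_{x_2}$ it follows $F_1(x_1,u)\neq\emptyset$'' is exactly this observation.
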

Theorem~\ref{t:abstraction} leads to constructive means to compute abstractions basically as follows. 
For every cell $x_2=\bar c+\segcc{-\eta/2,\eta/2}\in \bar X_2$ and input symbol $u\in U_2$
\begin{enumerate}[1)]
\item compute $c=\varphi(\tau,\bar c,u)$ and $r' = \beta_{x_2}(\eta/2+z,u)$,
\item determine all cells $c'+\segcc{-\eta/2,\eta/2} \in X_2$ that satisfy
  \begin{IEEEeqnarray*}{c}
    \left( c + \segcc{-r',r'} \right) \cap
  \left (  c'+\segcc{-\eta/2-z,\eta/2+z} \right )\neq\emptyset,
  \end{IEEEeqnarray*}
  and define $F_2(x_2,u)$ as the set of all such cells.
\end{enumerate}
\begin{proof}[Proof of Theorem \ref{t:abstraction}]
We have to verify \ref{d:FRR:in},\ref{d:FRR:dyn} in 
Definition \ref{d:FRR} with $Q\circ P$ in place of $Q$. To see \ref{d:FRR:in}
let $(x_1,x_2) \in Q \circ P$ and $u \in U_{S_2}(x_2)$. 
Then, $F_2(x_2,u) \neq \emptyset$
and $x_2 \in \bar X_2$ by the assumptions on $u$ and $F_2$. 
By our assumption on the 
growth bound $\beta_{x_2}$ it follows $F_1(x_1,u) \neq \emptyset$, thus $u \in U_{S_1}(x_1)$.\\
To see \ref{d:FRR:dyn} in Definition \ref{d:FRR}, 
let $(x_1,x_2) \in Q \circ P$, 
$u \in U_{S_2}(x_2)$ and
$x_2' \in (Q\circ P)(F_1(x_1,u))$. 
It follows that 
$F_1(x_1,u) \cap P(x_2') \neq \emptyset$. Indeed,
$x_2' \in (Q \circ P)(x_1')$ for some $x_1' \in F_1(x_1,u)$, thus
$(x_1',P(x_2')) \in Q$, so $x_1' \in P(x_2')$. 
Next, from $x_1 \in P(x_2)$ and the properties of $\beta_{x_2}$ 
it follows that $F_1(x_1,u) \subseteq \varphi(\tau,\bar c,u) + \segcc{-r',r'}$, where
$x_2 = \bar c + \segcc{-\eta/2,\eta/2}$ and $r' = \beta_{x_2}(\eta/2+z,u)$. Thus, 
$(\varphi(\tau,\bar c,u) + \segcc{-r',r'})\cap P(x_2') \neq \emptyset$, 
and by the properties of $F_2$, we conclude $x_2' \in F_2(x_2,u)$. 
\end{proof}

\section{Estimation of the size of abstractions}
\label{s:estimation}
The size of an abstraction $S_2 = (X_2,U_2,F_2)$ that is obtained by 
Theorem \ref{t:abstraction} is given by the number of transitions.
To obtain a \emph{prediction} on this size, 
we will disregard overflow symbols 
by assuming $\bar X_2 = X_2$, 
and in addition, we will assume $c$ in \ref{e:abs:tf} 
is a random vector uniformly distributed on the cells. Then, 
the following theorem shows that 
the function $E \colon (\mathbb{R}_+ \setminus \{0\})^n \to \mathbb{R}_+$ 
given by \ref{e:DLFunction}
with $A \in \mathbb{R}^{n \times n}_+$, $p \in \mathbb{R}^n_+$ 
provides a prediction on the cardinality of $F_2(x_2,u)$
for fixed $(x_2,u) \in \bar X_2 \times U_2$ 
in dependence of the grid parameter $\eta$.

The key property of the functional $E$ is that 
it also provides 
an accurate prediction when actually computing abstractions.
(See Section \ref{s:example}.)

\begin{theorem}
\label{t:size}
Assume the hypotheses of Theorem \ref{t:abstraction} with 
$\bar X_2 = X_2$. 
Let $(x_2,u) \in X_2 \times U_2$ and let $\beta \defas \beta_{x_2}$ in
Theorem \ref{t:abstraction} be of the form \ref{e:growthbound}, where
$v(u) \ge 0$ and $L(u)$ is essentially nonnegative.
For $c$ in \ref{e:abs:tf} assume in place of \ref{e:t:abstraction}
that $c$ is an $n$-dimensional vector of independent random variables
$c_i$, $i \in \intcc{1;n}$ each of which is uniformly distributed on
some interval of length $\eta_i$. Then the expected value of the
number of cells in $F_2(x_2,u)$ is given by 
$E(\eta)$ in \ref{e:DLFunction} with
\begin{IEEEeqnarray}{c't'c}
\label{e:ApinE}
A = \id + \e^{L(u) \tau} & \text{and} & p = 2(A z + v(u) ).%
\end{IEEEeqnarray}
\end{theorem}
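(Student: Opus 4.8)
The plan is to express the cardinality of $F_2(x_2,u)$ as a product of one-dimensional lattice-point counts and then to pass to expectations using the independence of the coordinates of $c$.

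Since $\bar X_2 = X_2$ is a cover of $\R^n$ by cells aligned on $\eta\Z^n$, and \ref{e:abs:ss} forces every element of $X_2$ to be a grid cell while the cover property forbids omitting one, the family $X_2$ is precisely $\{c'+\segcc{-\eta/2,\eta/2}\mid c'\in\eta\Z^n\}$, and $P(c'+\segcc{-\eta/2,\eta/2})=c'+\segcc{-\eta/2-z,\eta/2+z}$. With $r'=\beta(\eta/2+z,u)=\e^{L(u)\tau}(\eta/2+z)+v(u)$, the criterion \ref{e:abs:tf} reads $(c+\segcc{-r',r'})\cap(c'+\segcc{-\eta/2-z,\eta/2+z})\neq\emptyset$. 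A product of intervals is nonempty iff each factor is, so $c'+\segcc{-\eta/2,\eta/2}\in F_2(x_2,u)$ iff $|c_i-c_i'|\le\ell_i$ for all $i\in\intcc{1;n}$, where $\ell_i\defas r_i'+\eta_i/2+z_i$. Hence the number $N$ of cells in $F_2(x_2,u)$ factors as $N=\prod_{i=1}^n N_i$ with $N_i\defas\#\{k\in\Z\mid k\eta_i\in\intcc{c_i-\ell_i,\,c_i+\ell_i}\}$.

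Next I would evaluate $\mathbb{E}[N_i]$. Writing $J_i$ for the length-$\eta_i$ interval on which $c_i$ is uniformly distributed, linearity of expectation gives $\mathbb{E}[N_i]=\sum_{k\in\Z}\Pr[\,|c_i-k\eta_i|\le\ell_i\,]=\tfrac1{\eta_i}\sum_{k\in\Z}\bigl|J_i\cap\intcc{k\eta_i-\ell_i,\,k\eta_i+\ell_i}\bigr|$; since the translates $J_i-k\eta_i$, $k\in\Z$, cover $\R$ and overlap only in null sets, this sum of lengths equals $|\intcc{-\ell_i,\ell_i}|=2\ell_i$, so $\mathbb{E}[N_i]=2\ell_i/\eta_i$. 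Now $2\ell_i=2r_i'+\eta_i+2z_i$, and substituting $2r_i'=(\e^{L(u)\tau}\eta)_i+2(\e^{L(u)\tau}z)_i+2v(u)_i$ and collecting terms yields $2\ell_i=\bigl((\id+\e^{L(u)\tau})\eta\bigr)_i+2\bigl((\id+\e^{L(u)\tau})z+v(u)\bigr)_i=(A\eta)_i+p_i=p_i+\sum_{j=1}^n A_{i,j}\eta_j$ with $A,p$ as in \ref{e:ApinE}; essential nonnegativity of $L(u)$ together with $z,v(u)\ge 0$ makes $A\in\R^{n\times n}_+$ and $p\in\R^n_+$, so $E$ is well defined. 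Hence $\mathbb{E}[N_i]=\tfrac1{\eta_i}\bigl(p_i+\sum_j A_{i,j}\eta_j\bigr)$.

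Finally, the $c_i$ are independent, hence so are the $N_i$, and $\mathbb{E}[N]=\prod_{i=1}^n\mathbb{E}[N_i]=E(\eta)$ by \ref{e:DLFunction}, which is the assertion. The only step needing genuine care is the one-dimensional identity $\mathbb{E}[N_i]=2\ell_i/\eta_i$: it must be exact rather than asymptotic and independent of the position of $J_i$, both of which rely on $J_i$ being an interval of length exactly $\eta_i$, so that its $\eta_i\Z$-translates tile $\R$. Everything else — rewriting the box-intersection condition coordinatewise and expanding the affine growth bound \ref{e:growthbound} into $A$ and $p$ — is routine bookkeeping.
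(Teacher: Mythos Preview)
Your argument is correct and follows the same overall route as the paper: both reduce $|F_2(x_2,u)|$ to the number of lattice points of $\eta\Z^n$ in the box $c+\segcc{-\ell,\ell}$ with $\ell=r'+\eta/2+z$, factor into one-dimensional counts by independence, compute each one-dimensional expectation as $2\ell_i/\eta_i$, and then identify $2\ell=A\eta+p$ with $A,p$ as in \ref{e:ApinE}. The one methodological difference is in the computation of $\mathbb{E}[N_i]$: the paper rescales to $\eta_i=1$, writes $\ell_i/\eta_i=k+\varepsilon$ with $k\in\Z_+$, and integrates $N_1^1(\cdot,k+\varepsilon)$ over $\intcc{0,1}$ by an explicit case split on $\varepsilon$, whereas you obtain the same identity in one line from the observation that the $\eta_i\Z$-translates of the length-$\eta_i$ interval $J_i$ tile $\R$ up to a null set. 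Your tiling argument is shorter and sidesteps the case analysis; the paper's explicit integration makes the periodicity reduction and the role of the uniform distribution more visible. Either way the identity is exact, which is what the subsequent optimization requires.
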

\begin{proof}%[Proof of Theorem \ref{t:size}]
The number of elements in $F_2(x,u)$ for $x = \bar c + \segcc{-\eta/2,\eta/2}$ is given by
\begin{IEEEeqnarray}{c}\label{e:num:cells}
\begin{IEEEeqnarraybox}[][c]{l}
N_\eta^n(c,r')=
|\{p\in \eta\Z^n \mid p\in
c+\segcc{-r',r'}\}|
\IEEEeqnarraynumspace
\end{IEEEeqnarraybox}
\end{IEEEeqnarray}
with the random vector $c$ and \mbox{$r'=r+\e^{L(u) \tau} r + v(u)$} where $r = \eta/2+z$.
Here, $|X|$ stands for the cardinality of the set $X$.
We have $2r' = 2 r + 2 (\e^{L(u) \tau} r + v(u) ) = p + A \eta$.
The proof is therefore completed by the next lemma.
\end{proof}
\begin{lemma}
Consider the grid $\eta\Z^n$ in \ref{e:grid} with $\eta\in\R^n$, $\eta>0$. Let
$r\in\R_+^n$ and let 
$c_i$, $i\in\intcc{1;n}$ be $n$ independent random variables, where each
$c_i$ is uniformly distributed on some interval of length $\eta_i$.
Then the expected value of the number $N_\eta^n(c,r)$ defined in \ref{e:num:cells} 
is given by $\prod_{i=1}^n {2r_i}/{\eta_i}.$
\end{lemma}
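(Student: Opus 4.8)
The plan is to use independence to reduce the claim to one dimension, and then to evaluate the resulting one-dimensional expectation by interchanging an infinite sum with the expectation and invoking an elementary tiling argument.

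First I would note that $N_\eta^n(c,r)$ factorizes: $p\in c+\segcc{-r,r}$ holds iff $c_i-r_i\le p_i\le c_i+r_i$ for every $i\in\intcc{1;n}$, so for $p\in\eta\Z^n$, writing $p_i=k_i\eta_i$, we obtain $N_\eta^n(c,r)=\prod_{i=1}^n N_i$ with $N_i=|\{k\in\Z\mid c_i-r_i\le k\eta_i\le c_i+r_i\}|$, and the factor $N_i$ depends on $c_i$ only. Since the $c_i$ are independent, $E[N_\eta^n(c,r)]=\prod_{i=1}^n E[N_i]$, so it suffices to prove $E[N_i]=2r_i/\eta_i$ for each fixed $i$. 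From here on I suppress the index $i$ and write $N=|\{k\in\Z\mid c-r\le k\eta\le c+r\}|$, where $c$ is uniformly distributed on some interval $I$ of length $\eta$.

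Next I would observe that $N$ counts those $k\in\Z$ for which the event $\{c\in\intcc{k\eta-r,k\eta+r}\}$ occurs; since expectation distributes over a countable sum of nonnegative quantities (Tonelli), $E[N]=\sum_{k\in\Z}\Pr\{c\in\intcc{k\eta-r,k\eta+r}\}=\tfrac1\eta\sum_{k\in\Z}\lambda\!\left(I\cap\intcc{k\eta-r,k\eta+r}\right)$, where $\lambda$ denotes Lebesgue measure and we used that $c$ is uniform on $I$. It then remains to show that $\sum_{k\in\Z}\lambda(I\cap\intcc{k\eta-r,k\eta+r})=2r$. For this I would rewrite the sum as $\int_I h$, where $h(t)$ is the number of $k\in\Z$ with $t\in\intcc{k\eta-r,k\eta+r}$; the function $h$ is $\eta$-periodic, hence $\int_I h=\int_{\intco{0,\eta}} h$ for every interval $I$ of length $\eta$. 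Finally, $\int_{\intco{0,\eta}} h=\sum_{k\in\Z}\lambda\!\left(\intco{0,\eta}\cap(k\eta+\intcc{-r,r})\right)$, and translating the $k$-th summand by $-k\eta$ and using translation invariance of $\lambda$ turns it into $\sum_{k\in\Z}\lambda\!\left(\intco{-k\eta,(1-k)\eta}\cap\intcc{-r,r}\right)$; since the intervals $\intco{-k\eta,(1-k)\eta}$, $k\in\Z$, are pairwise disjoint with union $\R$, countable additivity of $\lambda$ yields the value $\lambda(\intcc{-r,r})=2r$. Combining, $E[N]=2r/\eta$, and the product over $i$ gives $E[N_\eta^n(c,r)]=\prod_{i=1}^n 2r_i/\eta_i$.

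I expect the main obstacle to be this last computation — the two interchanges of summation and integration together with the periodicity/tiling identity — since the tiling step is precisely where the factor $2r$ is produced; the only other point that needs a moment's care is the bookkeeping of open versus closed endpoints of the intervals $\intcc{k\eta-r,k\eta+r}$, which is immaterial because the sums above add measures (rather than measure a union) and, in addition, $c$ has a continuous distribution so single points carry no probability mass.
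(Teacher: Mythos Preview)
Your proof is correct. The reduction to dimension one via factorization and independence is identical to the paper's. The one-dimensional computation, however, proceeds differently: the paper rescales to a unit grid, writes $\hat\eta = r/\eta = k + \varepsilon$ with $k\in\Z_+$ and $\varepsilon\in\intco{0,1/2}$ (respectively $\varepsilon\in\intco{1/2,1}$), splits $\intcc{0,1}$ into three subintervals on which the count is constant, and evaluates the integral directly to obtain $2\hat\eta$. Your argument instead writes the count as $\sum_{k\in\Z}\mathbf{1}_{\{c\in k\eta+\intcc{-r,r}\}}$, applies Tonelli twice to turn the expectation into $\eta^{-1}\int_I h$, and then uses the $\eta$-periodicity of $h$ together with the fact that the shifted intervals $\intco{-k\eta,(1-k)\eta}$ tile $\R$ to obtain $2r$. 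Your route is more conceptual and avoids the case distinction on the fractional part of $r/\eta$; the paper's route is more elementary in that it never needs to justify interchanging an infinite sum with an integral. Both are short and complete, and your remark about open versus closed endpoints being immaterial is exactly right.
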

\begin{proof}
Note that $N_\eta^n(c,r)=\prod_{i=1}^n N_{\eta_i}^1(c_i,r_i)$ and
since the $c_i$ are mutually independent, the expected value of
$N_\eta^n(c,r)$ is given as the product of the expected values of
$N^1_{\eta_i}(c_i,r_i)$. Moreover, 
$N^1_{\eta_i}(c_i,r_i) = N^1_{1}(c_i/\eta_i,r_i/\eta_i)$
and $N_1^1(x + 1, \hat \eta) = N_1^1(x,\hat{\eta})$ for every 
$x \in \mathbb{R}$ and $\hat \eta \in \mathbb{R}_+$.
Hence, it suffices to consider $N_1^1(\hat c,\hat \eta)$ with $\hat c$ being
uniformly distributed on $\intcc{0,1}$ and $\hat \eta\in\R_+$.
The expected value of $N_1^1(\hat c , \hat \eta)$ is given by
$2\hat \eta$. Indeed, if $\hat \eta = k + \varepsilon$ with
$k \in \mathbb{Z}_+$ and $\varepsilon \in \intco{0,1/2}$, then we obtain
\begin{equation*}
\int_{0}^1 N_1^1(x,\hat \eta) \ \mathrm{d}x
=
(2k+1) \varepsilon
+
2k(1-2\varepsilon)
+
(2k+1)\varepsilon
=
2 \hat \eta
\end{equation*}
by separating the integration interval into
$\intcc{0,\varepsilon}$, $\intcc{\varepsilon,1-\varepsilon}$ and
$\intcc{1-\varepsilon,1}$.
The case $\varepsilon \in \intco{1/2,1}$ is similar.
\end{proof}
\section{Minimization of the size of abstractions}
\label{s:op}
Theorem \ref{t:size} motivates the following on the computation of abstractions in the special case that
the growth bounds in Theorem \ref{t:abstraction} coincide and do not depend on the input symbol, i.e., $\beta_{x_2}(r,u) = \beta_{x_2'}(r,u')$ for any $(x_2,u),(x_2',u') \in \bar X_2 \times U_2$, and any $r \in \R^n_+$: Consider the abstractions for $S_1$ that have cells of volume $\exp(\gamma)$, $\gamma \in \R$, and input alphabet $U_2$. Among those abstractions, the abstraction with the least expected size has cells that are aligned according grid parameter $\eta$, where $\eta$ is a solution of the optimization problem
\begin{equation}
\label{e:E:op}
\min_{\xi > 0} E(\xi) \text{ subject to } 
\exp(\gamma) = \prod_{i=1}^n\nolimits \xi_i. 
\end{equation}
Unfortunately, the optimization problem \ref{e:E:op} is non-convex if
$n \ge 2$, and non-convex problems are notoriously difficult to solve.

The main results of this work, which are presented in this section,
include a characterization of existence and uniqueness of $\eta$, and
the means to numerically compute $\eta$, so that the just motivated
heuristic to reduce the computational effort becomes applicable. We
will also investigate the generalization of \ref{e:E:op} to the case
of arbitrary growth bounds.

To establish aforementioned characterization we first bypass non-convexity. To this end, consider a transformation of \ref{e:E:op}:
\begin{equation}
\label{e:g:op}
\min_x g(x) \text{ subject to } x \in V_\gamma,
\end{equation}
where $g(x) = E(\exp(x))$ and
\begin{equation}
\label{e:V}
V_s  = 
\{ v \in \mathbb{R}^n \mid v_1+\ldots + v_n = s\}
\end{equation}
for $s \in \R$. Here and subsequently, the exponential $\exp$ is taken component-wise whenever the argument is a vector. The result below lists the outstanding properties of $g$.

\begin{theorem}
\label{t:g}
Let $n \ge 2$, $A \in \mathbb{R}_{+}^{n \times n}$, $p \in \mathbb{R}_{+}^n$ and
$\gamma \in \mathbb{R}$, and let $E$, $g$ and $V$ be defined as in
\ref{e:DLFunction} and above. Then $g$ is convex. Moreover, if all diagonal entries of $A$ are positive and 
$A$ or 
$\left ( \begin{smallmatrix} A & p \\ \mathbf{1} & 1 \end{smallmatrix} \right )$ 
is irreducible then the assertions below hold.
\begin{enumerate}[(i)]
\item 
\label{t:g:strictlyconvex}
$g$ is strictly convex on $V_\gamma$. 
To be more precise, \mbox{$g''(x)h^2>0$} for all $x \in V_\gamma$ and $h \in V_0 \setminus \{0\}$.
\item 
\label{t:g:2}
Let $\mu$ be the smallest nonzero entry of 
$A$ and $p$, \mbox{$c=(n-1)^{-1}$}. 
Then $x \in V_\gamma$ implies
\begin{equation}
\label{e:t:g:lowerbound}
g(x) \geq \mu^n \exp(-|\gamma|c) \exp( c \|x\|_\infty ).
\end{equation}
\end{enumerate}
\end{theorem}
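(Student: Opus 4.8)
The plan is to work with the logarithm $G(x) = \log g(x)$, since a sum of convex functions is easier to handle than a product. Writing $g(x) = \prod_{i=1}^n \frac{1}{\exp(x_i)}\bigl(p_i + \sum_j A_{i,j}\exp(x_j)\bigr)$, we get
\begin{equation*}
G(x) = -\sum_{i=1}^n x_i + \sum_{i=1}^n \log\Bigl(p_i + \sum_{j=1}^n A_{i,j}\exp(x_j)\Bigr).
\end{equation*}
The first term is linear, hence convex, and I would show each summand $x \mapsto \log(p_i + \sum_j A_{i,j}\exp(x_j))$ is convex: it is the composition of the affine map $x \mapsto (\log(A_{i,1}\exp(x_1)),\ldots)$ — more cleanly, it is a log-sum-exp function $\log(e^{y_0} + \sum_j e^{x_j + a_{i,j}})$ in shifted variables (with a constant term $p_i = e^{y_0}$ when $p_i>0$, and one fewer term when $p_i=0$), and log-sum-exp is a standard convex function. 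Since $G$ is convex and $\exp$ is convex and increasing, $g = \exp\circ\, G$ is convex; this gives the first assertion, which holds on all of $\mathbb{R}^n$ without irreducibility hypotheses.

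For strict convexity on $V_\gamma$, i.e. $g''(x)h^2 > 0$ for $x\in V_\gamma$ and $h\in V_0\setminus\{0\}$, I would again pass through $G$: since $g = \exp\circ G$ with $G$ convex, one computes $g'' h^2 = g(x)\bigl(G''h^2 + (G'h)^2\bigr)$, and as $g(x) > 0$ it suffices to show $G''h^2 + (G'h)^2 > 0$ for the relevant $h$. Now $G''h^2 \ge 0$ always (convexity), so the only way this fails is if simultaneously $G''h^2 = 0$ and $G'h = 0$. The Hessian of $G$ is the sum of the Hessians of the log-sum-exp summands (the linear term contributes nothing). Each log-sum-exp summand $\ell_i(x) = \log(p_i + \sum_j A_{i,j}e^{x_j})$ has positive semidefinite Hessian whose kernel, restricted to the directions $h$, I would compute explicitly: letting $w_j^{(i)} = A_{i,j}e^{x_j}/(p_i + \sum_k A_{i,k}e^{x_k})$ be the "weights", the Hessian of $\ell_i$ is $\mathrm{diag}(w^{(i)}) - w^{(i)}(w^{(i)})^\top$ (the covariance matrix of a distribution on the support $\{j : A_{i,j} > 0\}$, plus a slack atom when $p_i > 0$). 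This quadratic form $\ell_i''h^2$ vanishes exactly when $h$ is constant on the support of $w^{(i)}$ and, if $p_i>0$, that constant is $0$. So $G''h^2 = 0$ forces $h$ to be constant on the support of row $i$ of $A$ for every $i$, and $h_j = 0$ on that support whenever $p_i > 0$.

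The combinatorial heart of the argument — and the step I expect to be the main obstacle — is to deduce from these constraints, together with $G'h = 0$ and $h \in V_0$ (i.e. $\sum_j h_j = 0$), that $h = 0$, under the hypothesis that all diagonal entries of $A$ are positive and either $A$ or $\left(\begin{smallmatrix} A & p \\ \mathbf{1} & 1\end{smallmatrix}\right)$ is irreducible. The positive-diagonal hypothesis means $j$ always lies in the support of row $j$; combined with "$h$ constant on the support of each row," an irreducibility/connectivity chase on the directed graph of $A$ shows $h$ is globally constant on each irreducible component, and irreducibility of $A$ makes it globally constant, whence $h \in V_0$ forces $h = 0$. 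If instead only the bordered matrix is irreducible, one uses the extra row/column: the last row $(\mathbf 1, 1)$ ties all coordinates together through the slack variable, and the condition "$h_j = 0$ on the support of row $i$ when $p_i > 0$" plus $G'h=0$ propagates a zero; I would handle this by treating the bordered matrix as the adjacency structure of an augmented system and running the same connectedness argument, being careful that the augmented "variable" is pinned. I expect this bookkeeping — matching the vanishing pattern of $h$ to the irreducibility graph — to be the delicate part, whereas the analytic pieces (convexity of log-sum-exp, the Hessian formula, the $\exp\circ G$ identity) are routine.

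For assertion (ii), the plan is a direct estimate. Fix $x \in V_\gamma$ and let $k$ be an index with $\|x\|_\infty = |x_k|$. If $x_k > 0$: in the $k$-th factor of $E(\exp(x))$ we keep only the diagonal term, $\frac{1}{\exp(x_k)}\bigl(p_k + \sum_j A_{k,j}\exp(x_j)\bigr) \ge \frac{1}{\exp(x_k)} A_{k,k}\exp(x_k) = A_{k,k} \ge \mu$ — wait, this loses the $\exp(c\|x\|_\infty)$ growth, so I instead keep a term $A_{k,\ell}\exp(x_\ell)$ with $\ell$ chosen to make $x_\ell$ large; more robustly, since $\sum_i x_i = \gamma$ and $\|x\|_\infty = |x_k|$, there is an index $\ell$ with $x_\ell \ge (\|x\|_\infty - |\gamma|)/(n-1) = c(\|x\|_\infty - |\gamma|)$ (choosing $\ell$ to have the same sign contribution), and one bounds the $\ell$-involving factors from below by $\mu\exp(c(\|x\|_\infty - |\gamma|))/\exp(x_\ell)$ while bounding every other factor below by $\mu$ (using that each factor $\frac1{\eta_i}(p_i + \sum_j A_{i,j}\eta_j) \ge \frac1{\eta_i}A_{i,i}\eta_i \ge \mu$, wait — this requires $A_{i,i}>0$, which is given). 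Combining the $n$ factors, the $\exp(x_\ell)$ cancels and one is left with $\mu^n \exp(-c|\gamma|)\exp(c\|x\|_\infty)$, as claimed; the case $x_k < 0$ is symmetric via the factor $1/\exp(x_k) = \exp(-x_k) = \exp(\|x\|_\infty)$ directly. I would clean up the index-selection so that exactly one clean chain of inequalities yields \ref{e:t:g:lowerbound}; this part is elementary once the right index is chosen.
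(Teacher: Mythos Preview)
Your approach to (i) is essentially the paper's: you pass to $G = \log g$, compute the Hessian of each log-sum-exp summand, and reduce $g''(x)h^2 = 0$ to the combinatorial constraints ``$h$ constant on the support of row $i$, and equal to zero there if $p_i > 0$''. The paper reaches the same constraints via Cauchy--Schwarz on auxiliary vectors $a^{(i)}, b^{(i)} \in \mathbb{R}^{n+1}$, which is just a repackaging of your covariance computation. Your handling of the bordered-matrix case is vague, but the paper's treatment (a three-case analysis depending on whether the path in the irreducibility graph passes through the extra index $n{+}1$) confirms that this is indeed where the bookkeeping lives; your plan to augment by a pinned variable is the right one.

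Your argument for (ii), however, has a genuine gap: it never uses irreducibility. You bound every factor from below using only the positive diagonal of $A$, and then try to extract growth from a single factor by selecting an index $\ell$ based purely on the size of $x_\ell$. But the bound you want for that factor needs $A_{k,\ell} > 0$ (or $A_{\ell,k} > 0$) for that particular $\ell$, which nothing guarantees. The diagonal hypothesis alone cannot suffice: if $A$ is diagonal with positive entries and $p = 0$, then $g(x) = \prod_i A_{i,i}$ is \emph{constant} on $\mathbb{R}^n$, so no coercivity bound of the form \eqref{e:t:g:lowerbound} can hold. Irreducibility is essential, and it must enter through \emph{paths}, not single edges.

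The paper's argument does exactly this. It first proves the chain inequality $g(x) \ge \mu^n \exp(x_s - x_r)$ for every pair $(r,s)$ joined by a directed path in the graph of $\tilde A = \left(\begin{smallmatrix} A & p \\ 0 & 1 \end{smallmatrix}\right)$, by routing the product along the path (one off-diagonal entry per step, diagonal entries for the remaining factors). It then fixes $q$ with $|x_q| = \|x\|_\infty$, applies this inequality once for each $r \neq q$ with a common target $s \in \{q, n{+}1\}$, multiplies the $n-1$ resulting bounds, and takes the $(n-1)$-th root; the exponents telescope against $\sum_i x_i = \gamma$ to give \eqref{e:t:g:lowerbound}. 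A single index selection cannot reproduce this, and the ``cleanup'' you anticipate is not merely cosmetic.
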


The above result implies that the optimization problem \ref{e:g:op} is
convex. Moreover, since \ref{e:t:g:lowerbound} implies
$g(x) \to \infty$ as \mbox{$\|x\|_\infty \to \infty$}, $x \in V_\gamma$,
the problem \ref{e:g:op} has a unique solution under the hypotheses of
Theorem \ref{t:g} \cite[4.3.3]{OrtegaRheinboldt00}, and thus, so has
\ref{e:E:op}.
Our result also shows that standard numerical methods will
converge globally when applied to \ref{e:g:op},
e.g.~\cite[Sec.~14.5]{OrtegaRheinboldt00},
and some will do so even if \ref{e:g:op} is supplemented with a finite
number of constraints of the form
$a_i \leq x_i$ or $x_i \leq a_i$,
$a_i \in \R$ \cite[Th.~1]{Solodov09}.
For completeness, we remark that global convergence can also
be ensured if $g$ is strongly convex on $V_\gamma$, a property
established in \cite[Th.~3]{i15gridc} for irreducible $A$. However,
the property is not implied under the rather
mild hypotheses of Theorem \ref{t:g}, which are satisfied, in
particular, if every component of the state is subject to some
measurement error, i.e., if $z > 0$ in \ref{e:ApinE}, regardless of
the dynamics of the plant $S_1$ under investigation. Finally,
 we note that 
checking irreducibility (for $n \geq 2$) is equivalent to 
finding strongly connected components 
in directed graphs \cite[Th.~2.2.7]{BermanPlemmons94}, so
irreducibility can be checked with linear time algorithms \cite{Tarjan72}.
\begin{proof}[Proof of Theorem \ref{t:g}]
Convexity of $g$ has been established in \cite[Th.~3]{i15gridc}. We
first prove \ref{t:g:strictlyconvex}. Define
$
R_i(x)
=
p_i + \sum_{j=1}^n A_{i,j} \e^{x_j}
$
and
$
G(x)
=
\sum_{i=1}^n
\ln R_i(x)
$
to see that
\[
g(x)
=
\e^{-\gamma} \prod_{i=1}^n R_i(x)
=
\exp( G(x) - \gamma )
\]
for every $x \in V_{\gamma}$.
It follows that
$
g''(x)h^2
=
g(x)\left( G'(x)h \right)^2
+
g(x) G''(x)h^2
$
and that
$
(G'(x)h)^2
+
G''(x)h^2
$ 
equals
\begin{equation}
\label{e:SumInR}
\left(
\sum_{i=1}^n \frac{R_i'(x)h}{R_i(x)}
\right)^2
+
\sum_{i=1}^n
\frac{R_i''(x)h^2 R_i(x) - (R_i'(x)h)^2}{(R_i(x))^2}
\end{equation}
for all $x \in V_{\gamma}$ and all $h \in V_0$.
Define vectors $a^{(i)}, b^{(i)} \in \mathbb{R}^{n+1}$ by
$b^{(i)}_{n+1} = p_i^{1/2}$, $a^{(i)}_{n+1} = 0$,
$b^{(i)}_j
=
\left( A_{i,j} \e^{x_j} \right)^{1/2}
$
and
$
a^{(i)}_j
=
b^{(i)}_j h_j
$
for every $j \leq n$. Then $b^{(i)}$ is not a zero vector 
for any $i \in \intcc{1;n}$. 
Use \ref{e:SumInR} to see that $g(x)^{-1}g''(x)h^2$ equals
\begin{IEEEeqnarray}{l}\label{e:ab}
\Big  ( \sum_{i=1}^n \frac{\innerProd{a^{(i)}}{b^{(i)}}}{\|b^{(i)}\|_2^2} \Big )^2 
+ 
\sum_{i=1}^n \frac{\|a^{(i)}\|_2^2\! \cdot\! \| b^{(i)}\|^2_2-\innerProd{a^{(i)}}{b^{(i)}}^2}{\|b^{(i)}\|_2^4}.
\IEEEeqnarraynumspace
\end{IEEEeqnarray}
Now assume $g''(x)h^2 = 0$ for some
$h \in V_0$ and let us show that \mbox{$h=0$}. 
Indeed, we deduce from \ref{e:ab} and Cauchy's inequality 
that for all $i \in \intcc{1;n}$ there exists $\lambda_i \in \R$ such that 
$a^{(i)} = \lambda_i b^{(i)}.$
This equation implies 
a) $\lambda_i = 0$ 
whenever $p_i>0$, 
b) $\lambda_i = h_i$ for any $i$ as $A_{i,i}>0$, and therefore 
c) $h_i = h_j$ whenever $A_{i,j}>0$.  
Next, assume that $h$ has two nonzero components $h_r$, $h_s$ such that $h_r \neq h_s$. 
By the irreducibility of $A$ or $\left ( \begin{smallmatrix} A & p \\ \mathbf{1} & 1 \end{smallmatrix} \right )$ 
there exist distinct indices $i_1,\ldots,i_m \in \intcc{1;n+1}$ such that $r = i_1$, $s = i_m$ 
and at least one of the following cases occurs:
\begin{enumerate}[1)]
\item $n+1 \notin \{i_1,\ldots,i_m\}$ and $A_{i_k,i_{k+1}} > 0$ for all $k\leq m-1$,
\item $p_{r} > 0$,
\item $m\geq 3$, $p_{i_{m-1}}> 0$ and $A_{i_k,i_{k+1}} > 0$ for all $k \leq m-2$.
\end{enumerate}
The remarks a), b), c) above will exclude each of the three cases.
Indeed, the first case is impossible as it implies $h_r = h_s$. 
The second case implies $h_r = 0$, so cannot occur either. 
For the same reason, the third case is impossible as $h_{i_{m-1}} = 0$ and $h_r = h_{i_{m-1}}$. 
Consequently, the nonzero entries of $h$ coincide.
However, as \ref{e:ab} vanishes, we conclude
$0 = \sum_{i=1}^n \lambda_i = \sum_{i=1}^n h_i$, 
and so $h = 0$.\\
Now we prove \ref{t:g:2}.
We begin with deriving an inequality that we use in the second part
of the proof.
To this end, note first that by our assumptions
the following property holds for all $r \in \intcc{1;n}$ and 
all $s \in \intcc{1;n}$, or for all $r \in \intcc{1;n}$ and $s = n+1$:
There exists a subset of indices $\{i_1,\ldots,i_m\} \subseteq \intcc{1;n+1}$ such that 
$(r,s) = (i_1,i_m)$ and $\tilde A_{i_k,i_{k+1}} > 0$ for all $k\in \intco{1;m}$, where 
$\tilde A = \left ( \begin{smallmatrix} A & p \\ 0 & 1 \end{smallmatrix} \right )$. Now fix $(r,s)$ 
satisfying previous 
condition with the subset of indices $P \defas \{i_1,\ldots,i_m\}$. 
Let $x \in \R^n$ and set $x_{n+1} \defas 0$ to see that
\begin{IEEEeqnarray}{cc}
&g(x)\geq \notag\e^{-\gamma}
\Big(
\prod_{j=1 \atop j \notin P\setminus \{i_m\}}^n
\tilde A_{j,j} \exp(x_j)
\Big)
\prod_{k=1}^{m-1}
\tilde A_{i_k, i_{k+1}}
\exp(x_{i_{k+1}})\\
&
\geq
\mu^n
\Big(
\prod_{j \in P} \exp(-x_j)
\Big)
\prod_{k=2}^m \exp(x_{i_k}) =\mu^n \exp(x_{s} - x_{r}).
\label{e:basicineq} 
\IEEEeqnarraynumspace
\end{IEEEeqnarray}
For the second part of the proof let $q\in \intcc{1;n}$ such that $\|x\|_\infty = |x_q|$ and 
observe $x_q - \gamma = - \sum_{r=1,r \neq q}^n x_r$.
So, for some $s \in \{q, n+1\}$ we have
\begin{equation}
\label{e:prod:exp}
\prod_{r=1 \atop r \neq q}^n \exp(x_s-x_r) = \exp((n-1)x_s)\exp(x_q-\gamma).
\end{equation}
Therefore,
$( \mu^{-n} g(x))^{n-1}$ is not less than \ref{e:prod:exp} by applying \ref{e:basicineq} 
to each factor of the product in \ref{e:prod:exp}. This implies \ref{e:t:g:lowerbound}
if $x_q\geq 0$. If $x_q<0$ and if \ref{e:basicineq} holds for $s = n+1$ then \ref{e:t:g:lowerbound} follows 
obviously. 
If $x_q<0$ and if \ref{e:basicineq} holds for all $r,s \in \intcc{1;n}$ then
take the inverse on both sides of \ref{e:prod:exp} with $s=q$, and use
\ref{e:basicineq} with $r$ and $q$ in place of $s$ and $r$,
respectively, to see that $(\mu^{-n}g(x))^{n-1}\geq \exp(\gamma - n x_q)$. 
So, the proof is easily completed. %
\end{proof}

The announced characterization related to \ref{e:E:op} is as follows.

\begin{theorem}
\label{t:E}
Let $\gamma \in \R$ and
$E$ be defined as in \ref{e:DLFunction} with $A$ having positive diagonal. 
Then the optimization problem \ref{e:E:op}
has a unique minimum point 
iff
$A$ or 
$\left ( \begin{smallmatrix} A & p \\ \mathbf{1} & 1 \end{smallmatrix}
\right )$ is irreducible.
\end{theorem}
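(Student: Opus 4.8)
The plan is to transfer the question about the non-convex problem \ref{e:E:op} to the convex surrogate \ref{e:g:op} via the bijection $\xi = \exp(x)$, and then invoke Theorem \ref{t:g}. Since $\xi \mapsto \ln\xi$ (component-wise) is a bijection from $(\R_+\setminus\{0\})^n$ onto $\R^n$ carrying the constraint set $\{\xi>0 \mid \prod_i\xi_i = \e^\gamma\}$ bijectively onto the hyperplane $V_\gamma$ in \ref{e:V}, and $g(x) = E(\exp(x))$, the point $\xi^*$ is a (the unique) minimizer of \ref{e:E:op} if and only if $x^* = \ln\xi^*$ is a (the unique) minimizer of \ref{e:g:op}. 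So it suffices to characterize unique solvability of \ref{e:g:op}.

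For the ``if'' direction I would argue exactly as in the paragraph following Theorem \ref{t:g}: under the stated irreducibility hypothesis (and positive diagonal of $A$), part \ref{t:g:strictlyconvex} of Theorem \ref{t:g} gives strict convexity of $g$ on $V_\gamma$, while part \ref{t:g:2}, i.e.\ the bound \ref{e:t:g:lowerbound}, forces $g(x)\to\infty$ as $\|x\|_\infty\to\infty$ along $V_\gamma$; hence $g$ is coercive and strictly convex on the closed convex set $V_\gamma$, so it has a unique minimizer by a standard argument (e.g.\ \cite[4.3.3]{OrtegaRheinboldt00}). This yields a unique minimum point of \ref{e:E:op}.

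For the ``only if'' direction I need to show that if both $A$ and $\left(\begin{smallmatrix} A & p \\ \mathbf 1 & 1\end{smallmatrix}\right)$ are reducible, then \ref{e:E:op} does not have a unique minimum point. The natural approach is to exhibit, in the reducible case, a nontrivial direction $h\in V_0\setminus\{0\}$ along which $g$ is constant (or at least a line segment of minimizers), so the minimizer, if it exists at all, is not unique. Concretely, reducibility of $\left(\begin{smallmatrix} A & p \\ \mathbf 1 & 1\end{smallmatrix}\right)$ means its directed graph on $\{1,\dots,n+1\}$ is not strongly connected; since vertex $n+1$ has edges to all of $\{1,\dots,n\}$ (from the row $\mathbf 1$) but, because $A$ is also reducible and the column above the $1$ is $p$, the structure splits the indices $\{1,\dots,n\}$ into classes. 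I would use this to find a proper nonempty index set $I\subsetneq\{1,\dots,n\}$ with $A_{i,j}=0$ whenever $i\in I$, $j\notin I$ \emph{and} $p_i=0$ for $i\in I$ (so that, reading off \ref{e:DLFunction}, each factor $R_i(\eta)=p_i+\sum_j A_{i,j}\eta_j$ with $i\in I$ depends only on $\eta_j$, $j\in I$, while homogeneity degree counting makes the $\prod 1/\eta_i$ factors cancel). Then scaling $\eta_j\mapsto t\eta_j$ for $j\in I$ and $\eta_j\mapsto t^{-|I|/(n-|I|)}\eta_j$ for $j\notin I$ preserves the volume constraint and leaves $E$ invariant, giving a whole curve of points with the same $E$-value; in particular no unique minimum point. (One has to check that this direction indeed lies in $V_0$ after the logarithmic change of variables, which it does by the volume-preserving normalization.)

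The main obstacle I expect is the bookkeeping in the ``only if'' direction: extracting from the simultaneous reducibility of $A$ and $\left(\begin{smallmatrix} A & p \\ \mathbf 1 & 1\end{smallmatrix}\right)$ the right invariant index set $I$ — one must handle the interplay between the zero pattern of $A$ and the zero pattern of $p$, since a class that leaks into other coordinates only through a positive $p_i$ must be excluded, and one has to verify such an $I$ always exists when strong connectivity fails for \emph{both} matrices. Once $I$ is in hand, the scaling argument and the reduction to \ref{e:g:op} are routine. I would also double-check the degenerate small cases ($|I|=n$ cannot happen since $I$ is proper; $n-|I|\ge 1$ so the exponent is well-defined) and the boundary behaviour to be sure the constructed family genuinely consists of feasible points of \ref{e:E:op}.
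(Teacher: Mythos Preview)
Your sufficiency argument is correct and coincides with the paper's: strict convexity of $g$ on $V_\gamma$ together with the coercivity bound \ref{e:t:g:lowerbound} yields a unique minimizer of \ref{e:g:op}, hence of \ref{e:E:op}.

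The necessity argument, however, has a genuine gap: the scaling you propose does \emph{not} leave $E$ invariant. With your index set $I$ you correctly note that $R_i(\eta)=\sum_{j\in I}A_{i,j}\eta_j$ depends only on $(\eta_j)_{j\in I}$ for $i\in I$; but for $i\notin I$ the factor $R_i(\eta)=p_i+\sum_jA_{i,j}\eta_j$ may still depend on $\eta_j$ with $j\in I$, since nothing in your hypotheses forces $A_{i,j}=0$ for $i\notin I$, $j\in I$. In the block form obtained from Lemma~\ref{l:structure} (which is precisely what your extraction of $I$ amounts to), this is the coupling block $Z$ in $PAP^\top=\left(\begin{smallmatrix}X&0\\Z&Y\end{smallmatrix}\right)$: your $I$ corresponds to the rows of $X$, and the factors indexed by the remaining rows contain the cross term $Zx^{(1)}$ as well as $\bar p$. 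Under your volume-preserving scaling these factors do not scale homogeneously, so $E$ is not constant along the curve in general.

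What actually happens, and what the paper proves, is a dichotomy. With $\xi_\lambda=(\lambda^{-n_2/n_1}x^{(1)},\lambda x^{(2)})$ (your scaling up to reparametrization) one has, for any feasible $x$,
\[
E(\xi_\lambda)\ \longrightarrow\ E_{X,0}(x^{(1)})\cdot E_{Y,0}(x^{(2)})\qquad(\lambda\to\infty),
\]
while $E(x)\ge E_{X,0}(x^{(1)})\cdot E_{Y,0}(x^{(2)})$, with strict inequality unless $Zx^{(1)}=0$ and $\bar p=0$. If the inequality is strict, any putative minimizer $x$ is beaten by $\xi_\lambda$ for large $\lambda$, so \emph{no} minimizer exists; if equality holds, then $E(\xi_\lambda)=E(x)$ for all $\lambda$ and minimizers, if any, come in one-parameter families. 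In either case there is no unique minimum point. Your sketch captures only the second branch; to close the argument you must also treat the first, where the obstruction is nonexistence rather than nonuniqueness.
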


The proof of above theorem requires the next lemma.
Below, $A^\top$ denotes the transpose of a matrix $A$.
\begin{lemma}
\label{l:structure}
Let $n\ge 2$, let $A \in \mathbb{R}^{n\times n}$, $p \in \mathbb{R}^n_+$. Assume that both 
$A$ and 
$\tilde A = \left ( \begin{smallmatrix} A & p \\ \mathbf{1} & 1 \end{smallmatrix} \right )$ are reducible. 
Then $\tilde P\tilde A \tilde P^\top$ equals 
\begin{equation}
\label{e:PAPT}
\begin{pmatrix}
X & 0 & 0 \\ 
Z & Y & \bar p \\
\mathbf{1} & \mathbf{1} & 1 
\end{pmatrix}
\end{equation}
where $\tilde P = (\begin{smallmatrix}
P & 0 \\ 0 & 1
\end{smallmatrix} ) \in \R^{(n+1) \times (n+1)}$,
$P \in \mathbb{R}^{n\times n}$ is a permutation matrix,
$X \in \R^{n_1 \times n_1}$,
$Y \in \R^{n_2 \times n_2}$,
$Z \in \R^{n_2 \times n_1}$,
$n_1,n_2 \in \intco{1;n}$,
$n_1+n_2 = n$ and
$\bar p \in \R^{n_2}_+$.
\end{lemma}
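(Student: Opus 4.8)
The plan is to exhibit a single index set that is simultaneously ``invariant'' for $A$ and annihilates the corresponding entries of $p$, and then to permute it to the top. I would start from the standard graph-theoretic characterization of reducibility: a matrix $M\in\R^{k\times k}$ is reducible if and only if there is a permutation matrix making $M$ block-triangular, equivalently, if and only if there is a nonempty proper subset $S\subseteq\intcc{1;k}$ with $M_{a,b}=0$ for all $a\in S$ and all $b\in\intcc{1;k}\setminus S$ \cite[Ch.~2]{BermanPlemmons94}.

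First I would apply this to $\tilde A$. Since $\tilde A$ is reducible, fix a nonempty proper $S\subseteq\intcc{1;n+1}$ with $\tilde A_{a,b}=0$ whenever $a\in S$ and $b\notin S$. The key observation is that $n+1\notin S$: the last row of $\tilde A$ is $(\mathbf 1,1)$, so $\tilde A_{n+1,b}=1>0$ for every $b$, and hence $n+1\in S$ would force $S=\intcc{1;n+1}$, contradicting properness. Therefore $S\subseteq\intcc{1;n}$, and for every $a\in S$ we have both $A_{a,b}=\tilde A_{a,b}=0$ for $b\in\intcc{1;n}\setminus S$ and $p_a=\tilde A_{a,n+1}=0$.

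Next I would split into two cases. If $S\subsetneq\intcc{1;n}$, put $I_1\defas S$. If instead $S=\intcc{1;n}$, then the previous step gives $p=0$; now I invoke reducibility of $A$ to get a nonempty proper subset $I_1\subsetneq\intcc{1;n}$ with $A_{a,b}=0$ for $a\in I_1$, $b\notin I_1$, and observe that $p_a=0$ for $a\in I_1$ holds trivially. In either case, $I_1$ is a proper nonempty subset of $\intcc{1;n}$, so $I_2\defas\intcc{1;n}\setminus I_1$ is also nonempty, $A_{a,b}=0$ for $a\in I_1$ and $b\in I_2$, and $p_a=0$ for $a\in I_1$. Finally I would pick any permutation matrix $P\in\R^{n\times n}$ sending $I_1$ onto the first $n_1\defas|I_1|$ positions and $I_2$ onto the remaining $n_2\defas n-n_1$ positions. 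With $\tilde P=\left(\begin{smallmatrix}P&0\\0&1\end{smallmatrix}\right)$, a direct computation shows $\tilde P\tilde A\tilde P^\top$ has the block shape \ref{e:PAPT}: the block $X$ is $A$ restricted to $I_1\times I_1$, the two blocks to the right of $X$ vanish by the defining properties of $I_1$, the blocks $Z,Y$ and the vector $\bar p$ are the remaining parts of $A$ and of $p$ (so $\bar p\in\R^{n_2}_+$ because $p\ge 0$), and the bottom row stays $(\mathbf 1,\mathbf 1,1)$ because $\mathbf 1 P^\top=\mathbf 1$; moreover $n_1,n_2\ge 1$ and $n_1+n_2=n$ give $n_1,n_2\in\intco{1;n}$.

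I expect the one genuine subtlety to be exactly this case distinction: reducibility of $\tilde A$ on its own may only yield the trivial invariant set $S=\intcc{1;n}$, which forces $p=0$, and it is precisely at that point that the second hypothesis, reducibility of $A$, must be used to split $\intcc{1;n}$ further. Everything else -- checking $n+1\notin S$ and verifying how a block-triangular permutation of the first $n$ coordinates acts on the border row and column of $\tilde A$ -- is elementary bookkeeping.
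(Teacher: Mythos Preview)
Your proof is correct and follows essentially the same strategy as the paper's: use reducibility of $\tilde A$ together with the fact that the last row of $\tilde A$ has no zeros to force the invariant set to lie in $\intcc{1;n}$, and then invoke reducibility of $A$ in the degenerate case $S=\intcc{1;n}$ (i.e., $p=0$). The only difference is cosmetic: the paper phrases the argument in terms of block-triangular permutations and leaves the case split implicit in its final sentence, whereas you work with the equivalent index-set characterization and spell out the two cases explicitly.
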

\begin{proof}
There exists a permutation matrix $T \in \R^{(n+1) \times (n+1)}$
such that
$T\tilde A T^\top 
= 
\left ( \begin{smallmatrix}
B & 0 \\
D & C 
\end{smallmatrix} \right )
$,
where 
$B \in \mathbb{R}^{m_1 \times m_1}$, 
$C \in \mathbb{R}^{m_2 \times m_2}$,
$D \in \mathbb{R}^{m_2\times m_1}$,
$m_1,m_2 \in \intcc{1;n}$ and 
$m_1+m_2 = n+1$. This fact may be seen by using \cite[Th.~2.2.7]{BermanPlemmons94} to
establish the equivalence of the definition of irreducibility in Section \ref{s:prelims} 
and \cite[Def. 2.1.2]{BermanPlemmons94} for $n \geq 2$.
Next, as the last row of $\tilde A$ contains only nonzero entries, 
we conclude that the nonzero entries of $p$ are contained in a column of 
$C$.
Hence, we may assume without loss of generality that
the nonzero entries of $p$ are contained in the last column of
$C$,
i.e., that
$T = (\begin{smallmatrix}
P & 0 \\ 0 & 1
\end{smallmatrix} )$, 
where $P \in \R^{n \times n}$ is a permutation matrix. 
Then, observe that the upper left $n\times n$ submatrix of $T\tilde A T^\top$ equals 
$PAP^\top$ and the first $m_1$ entries
of the last column of $T\tilde A T^\top$ vanish.
So, as $A$ is reducible, we may redefine $P$ to be such that $P A P^\top =(\begin{smallmatrix}
X & 0 \\ Z & Y
\end{smallmatrix} )
$, which completes the proof.
\end{proof}

Below, we denote $E$ by $E_{A,p}$ whenever clarity requires 
to specify
$A \in \R^{n \times n}_+$ and $p \in \R^n_+$ in the definition
\ref{e:DLFunction} of $E$.

\begin{proof}[Proof of Theorem \ref{t:E}]
Sufficiency has already been established in the
remark following the statement of Theorem \ref{t:g}.
To prove necessity, assume that both matrices in the statement are reducible.
Note that $E_{A,p}(x) = E_{PAP^\top,Pp}(Px)$ for any permutation matrix $P \in \R^{n\times n}$ 
and $x \in (\R_+ \setminus \{0\})^n$. Therefore, assume without loss of generality that 
$\left ( \begin{smallmatrix} A & p \\ \mathbf{1} & 1 \end{smallmatrix} \right )$ 
is of the form \ref{e:PAPT}. For $x \in \R^n$ let $x^{(1)} = (x_1,\ldots,x_{n_1})$ 
and $x^{(2)}=(x_{n_1+1},\ldots,x_{n})$, i.e., $x =
(x^{(1)},x^{(2)})$. Then
\begin{equation} 
\label{e:product}
 E(x) = \prod_{i=1}^{n_1} (Xx^{(1)})_i \prod_{i=1}^{n_2} (Zx^{(1)} + Yx^{(2)} + \bar p)_i
\end{equation}
for any $x \in (\R_+ \setminus \{0\})^n$. Now, assume $x$ is a solution of \ref{e:E:op} and set
$\xi_{\lambda} = (\lambda^{-n_2/n_1}x^{(1)},\lambda x^{(2)})$
for every $\lambda>0$. 
It follows that $\xi_{\lambda}$ is a feasible point of \ref{e:E:op},
for all $\lambda>0$.
Next, if $Zx^{(1)} \neq 0$ or $\bar p \neq 0$
we obtain using \ref{e:product} that
\begin{equation}
\label{e:lowerboundonf}
E(x) > E_{X,0}(x^{(1)})\cdot E_{Y,0}(x^{(2)}).
\end{equation}
However, $E(\xi_{\lambda})$ converges to 
the right hand side of \ref{e:lowerboundonf} as $\lambda \to
\infty$, which contradicts the choice of $x$. 
If $Zx^{(1)} = 0$ and $\bar p = 0$ then
$E(x) = E(\xi_\lambda)$ for any $\lambda$, so any $\xi_\lambda$ is
a solution of \ref{e:E:op} for any $\lambda > 0$.
\end{proof}

Finally, we consider the general situation in Theorems
\ref{t:abstraction} and \ref{t:size}, where the growth bounds depend
on the cell and the input symbol.
In this case, the computation of an abstraction with finite $\bar X_2$ and finite $U_2$ requires a sequence of growth bounds $(\beta_j)_{j \in J}$ indexed by some finite set $J$. 
In particular, for any $i=(j,u) \in J \times U_2$ there are
an essentially nonnegative matrix $L^{(i)} \in \R^{n\times n}$ and $v^{(i)} \in \R^n_+$ such that $\beta_j(r,u)$ is given by the right hand side of \ref{e:growthbound} with 
$L^{(i)}$ and $v^{(i)}$ in place of
$L(u)$ and $v(u)$, respectively, for all $r \in \R^n_+$.
Therefore, we obtain by generalizing \ref{e:E:op} the following heuristic to reduce the size of abstractions that have cells of volume $\exp(\gamma)$, $\gamma\in \R$: Pick the grid parameter to solve
\begin{equation}
\label{e:heuristic}
\min_{\xi > 0} \widetilde{E}(\xi) \text{ subject to } 
\exp(\gamma) = \prod_{i=1}^n\nolimits \xi_i,
\end{equation}
where $\widetilde{E} \colon (\R_+ \setminus \{0\})^n \to \R_+$ is given by
\begin{equation}
\label{e:Estar}
\widetilde{E}(\xi) = \sum_{i\in I}\nolimits E_{A^{(i)},p^{(i)}}(\xi).
\end{equation}
Here, $A^{(i)}$ and $p^{(i)}$ are defined analogously to $A$ and $p$ in \ref{e:ApinE}, i.e.,
$A^{(i)}=\id + \exp(L^{(i)}\tau)$, 
$p^{(i)}=2(A^{(i)}z+ v^{(i)})$.

Theorem \ref{t:optimaleta} below provides sufficient conditions for \ref{e:heuristic} to 
possess a unique solution. To facilitate the practical verification of the conditions we will formulate them in terms of $L^{(i)}$ and $v^{(i)}$ rather than in terms of $A^{(i)}$ and $p^{(i)}$.
Moreover, Theorem \ref{t:optimaleta} will imply that \ref{e:heuristic} can be solved numerically by solving \ref{e:g:op} with $\widetilde{g}(x)\defas\widetilde{E}(\exp(x))$ in place of $g(x)$.

\begin{theorem}
\label{t:optimaleta}
Let 
$\gamma \in \R$,
$\tau > 0$,
$z \in \R^n_+$, 
and let
$I$ be a finite set.
For every $i \in I$ define
$A^{(i)}=\id + \exp(L^{(i)}\tau)$, 
$p^{(i)}=2(A^{(i)}z+ v^{(i)})$, 
where
$L^{(i)} \in \R^{n\times n}$ is essentially nonnegative, and 
$v^{(i)} \in \R^n_+$.
Let $V$, $\widetilde{E}$ and $\widetilde{g}$ be defined as in \ref{e:V}, \ref{e:Estar} and above.
For some $i\in I$ suppose that 
$L^{(i)}$ or
\begin{equation}
\label{e:t:optimaleta:tildeL}
\left ( 
\begin{matrix}
L^{(i)} & z + L^{(i)}z + v^{(i)} \\ 
\mathbf{1} & 1
\end{matrix} 
\right )
\end{equation}
is irreducible.
Then \ref{e:heuristic} possesses a unique solution.
Moreover, $\widetilde{g}\,''(x)h^2 > 0$ for all $x \in V_\gamma$ and $h \in V_0 \setminus \{0\}$, and
$\widetilde{g}(x)\to \infty$ as $\|x\|_\infty \to \infty$, $x \in V_{\gamma}$.
\end{theorem}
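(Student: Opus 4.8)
The plan is to reduce Theorem \ref{t:optimaleta} to the already-established Theorem \ref{t:g}, since $\widetilde{g}$ is a finite sum of functions of the form $g^{(i)}(x) = E_{A^{(i)},p^{(i)}}(\exp(x))$, each of which is convex on all of $\R^n$ and strictly convex on $V_\gamma$ precisely when the corresponding matrix hypothesis holds. Convexity of $\widetilde{g}$ is immediate because a sum of convex functions is convex. For strict convexity on $V_\gamma$, it suffices that \emph{one} summand $g^{(i_0)}$ satisfies $g^{(i_0)\prime\prime}(x)h^2 > 0$ for all $x \in V_\gamma$, $h \in V_0 \setminus \{0\}$, while the remaining summands contribute $g^{(i)\prime\prime}(x)h^2 \ge 0$ (true by convexity of each $g^{(i)}$). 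Likewise, coercivity $\widetilde{g}(x) \to \infty$ as $\|x\|_\infty \to \infty$ on $V_\gamma$ follows from the lower bound \ref{e:t:g:lowerbound} applied to that single summand $g^{(i_0)}$, since all other summands are nonnegative. Given convexity, strict convexity on $V_\gamma$, and coercivity, uniqueness of the minimizer of \ref{e:heuristic} follows exactly as in the remark after Theorem \ref{t:g}, via \cite[4.3.3]{OrtegaRheinboldt00} together with the equivalence of \ref{e:heuristic} and \ref{e:g:op} with $\widetilde{g}$ in place of $g$ (the transformation $x \mapsto \exp(x)$ maps the feasible set $V_\gamma$ of \ref{e:g:op} bijectively onto the feasible set of \ref{e:heuristic}).

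The one genuine piece of work is translating the irreducibility hypothesis, stated in terms of $L^{(i)}$ and the augmented matrix \ref{e:t:optimaleta:tildeL}, into the hypothesis required by Theorem \ref{t:g}, which is phrased in terms of $A^{(i)} = \id + \exp(L^{(i)}\tau)$ and $\left(\begin{smallmatrix} A^{(i)} & p^{(i)} \\ \mathbf{1} & 1 \end{smallmatrix}\right)$. Two facts are needed here. First, every diagonal entry of $A^{(i)} = \id + \exp(L^{(i)}\tau)$ is at least $1 > 0$, since $\exp(L^{(i)}\tau)$ is entrywise nonnegative (as $L^{(i)}$ is essentially nonnegative) and $\id$ adds $1$ on the diagonal; so the positive-diagonal requirement of Theorem \ref{t:g} is automatic. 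Second, I must show: if $L^{(i)}$ is irreducible then so is $A^{(i)}$, and if \ref{e:t:optimaleta:tildeL} is irreducible then so is $\left(\begin{smallmatrix} A^{(i)} & p^{(i)} \\ \mathbf{1} & 1 \end{smallmatrix}\right)$. For the first implication, note that for an essentially nonnegative matrix $L$, the off-diagonal sign pattern of $\exp(L\tau)$ dominates that of $L$ in the sense that $L_{k,l} > 0$ (for $k \ne l$) implies $(\exp(L\tau))_{k,l} > 0$ — indeed the power series for $\exp(L\tau)$, after shifting $L$ by a multiple of $\id$ to make it nonnegative, has a term $\tau L$ whose $(k,l)$ entry is positive and nothing can cancel it since all terms are nonnegative. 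Hence the directed graph of $A^{(i)}$ contains the directed graph of $L^{(i)}$ on the off-diagonal edges, so irreducibility of $L^{(i)}$ (which is a connectivity property of that graph, per the definition in Section \ref{s:prelims}) passes to $A^{(i)}$. For the augmented matrices, the same edge-inclusion argument works on indices $1,\dots,n$, and one checks that the extra row/column behave compatibly: the bottom row is $\mathbf{1}$ in both cases, and the last column entry $z + L^{(i)}z + v^{(i)}$ of \ref{e:t:optimaleta:tildeL} is positive in a coordinate $k$ only if the corresponding entry $p^{(i)}_k = 2(A^{(i)}z + v^{(i)})_k = 2((\id + \exp(L^{(i)}\tau))z + v^{(i)})_k$ is positive — here one verifies the inequality $p^{(i)}_k \ge 2(z + L^{(i)}z + v^{(i)})_k$ holds componentwise using $\exp(L^{(i)}\tau) \ge \id + L^{(i)}\tau \ge \id + L^{(i)}$ after the usual nonnegativity shift, or at least that the supports are comparable in the needed direction, so that a positive edge into node $n+1$ in the graph of \ref{e:t:optimaleta:tildeL} is also present in the graph of $\left(\begin{smallmatrix} A^{(i)} & p^{(i)} \\ \mathbf{1} & 1 \end{smallmatrix}\right)$.

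With those two graph-inclusion lemmas in hand, the hypothesis "$L^{(i)}$ or \ref{e:t:optimaleta:tildeL} is irreducible" for the distinguished index $i = i_0$ yields "$A^{(i_0)}$ or $\left(\begin{smallmatrix} A^{(i_0)} & p^{(i_0)} \\ \mathbf{1} & 1 \end{smallmatrix}\right)$ is irreducible," which together with the automatic positivity of the diagonal of $A^{(i_0)}$ is exactly the hypothesis of Theorem \ref{t:g} for $g^{(i_0)}$. Applying Theorem \ref{t:g}\eqref{t:g:strictlyconvex} to $g^{(i_0)}$ and convexity (Theorem \ref{t:g}, first assertion) to the remaining $g^{(i)}$, summing gives $\widetilde{g}\,''(x)h^2 > 0$ on $V_\gamma$ for $h \in V_0 \setminus \{0\}$. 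Applying Theorem \ref{t:g}\eqref{t:g:2} to $g^{(i_0)}$ and nonnegativity of the others gives the coercivity statement. Uniqueness of the solution of \ref{e:heuristic} then follows as indicated.

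The main obstacle is the second graph-inclusion lemma for the augmented matrices — specifically, making precise that the positivity pattern of the last column of \ref{e:t:optimaleta:tildeL}, namely $z + L^{(i)}z + v^{(i)}$, is dominated by that of $p^{(i)} = 2(A^{(i)}z + v^{(i)}) = 2((\id+\exp(L^{(i)}\tau))z + v^{(i)})$. This requires the entrywise inequality $\exp(L^{(i)}\tau) \ge \id + L^{(i)}$ on the relevant support, which is where the essential-nonnegativity-plus-shift trick does the work; once that is set up, the rest is bookkeeping on directed paths using the definition of irreducibility from Section \ref{s:prelims} and the characterization via strongly connected components \cite[Th.~2.2.7]{BermanPlemmons94}.
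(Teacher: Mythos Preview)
Your proposal is correct and follows essentially the same route as the paper's proof: reduce to Theorem \ref{t:g} applied to one distinguished summand $g^{(i_0)}$, use convexity and positivity of the remaining summands, and translate the irreducibility hypothesis from the $L^{(i)}$-based matrices to the $A^{(i)}$-based ones via sign-pattern (graph-inclusion) arguments. The paper packages the latter step into Lemma \ref{l:expLt} (after first replacing $L^{(i)}z$ by $\exp(L^{(i)}\tau)z$ in the last column using \ref{e:l:expLt:1}), whereas you reprove that lemma's relevant direction inline; the logical content is the same.

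One caveat: the entrywise inequality $\exp(L^{(i)}\tau) \ge \id + L^{(i)}$ that you tentatively invoke for the last-column comparison is \emph{not} valid for essentially nonnegative $L^{(i)}$ with negative diagonal (only for nonnegative matrices). What is true, and what both you and the paper actually need, is the support inclusion: if $(z + L^{(i)}z + v^{(i)})_k > 0$ then $p^{(i)}_k > 0$. This follows because if $z_k = v^{(i)}_k = 0$ one must have $\sum_{j\neq k} L^{(i)}_{k,j} z_j > 0$, hence some $j\neq k$ with $L^{(i)}_{k,j}>0$ and $z_j>0$, whence $(\exp(L^{(i)}\tau))_{k,j}>0$ and $(\exp(L^{(i)}\tau)z)_k>0$. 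Your ``or at least the supports are comparable'' hedge is the correct statement; drop the entrywise inequality and keep only the support argument.
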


The proof of Theorem \ref{t:optimaleta} requires the result below.

\begin{lemma}
\label{l:expLt}
Let $p \in \mathbb{R}^n_+$ and let $L \in \mathbb{R}^{n \times n}$ be essentially nonnegative. 
We have the following:
\begin{enumerate}
\item
\label{l:expLt:L}
$L$ is irreducible iff $\exp(Lt)$ is irreducible for every $t>0$.
\item 
\label{l:expLt:tildeL}
If $L$ is reducible, then 
$\left ( \begin{smallmatrix} L & p \\ \mathbf{1} & 1 \end{smallmatrix} \right )$ 
is irreducible iff $\left ( \begin{smallmatrix} \exp(Lt) & p \\ \mathbf{1} & 1 \end{smallmatrix} \right )$ is irreducible for every $t>0$.
\end{enumerate}
\end{lemma}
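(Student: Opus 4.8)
The goal is to prove Lemma \ref{l:expLt}, which relates irreducibility of an essentially nonnegative matrix $L$ (resp. the bordered matrix $\left(\begin{smallmatrix} L & p \\ \mathbf{1} & 1\end{smallmatrix}\right)$) to irreducibility of $\exp(Lt)$ (resp. $\left(\begin{smallmatrix}\exp(Lt) & p \\ \mathbf{1} & 1\end{smallmatrix}\right)$) for every $t>0$. The natural tool is the graph-theoretic characterization of irreducibility recalled in Section \ref{s:prelims}: a matrix is irreducible iff its directed graph (an edge $r\to s$ whenever the $(r,s)$ entry is nonzero) is strongly connected. So the whole proof reduces to comparing the sign patterns (equivalently, the directed graphs) of $L$ and $\exp(Lt)$.

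\textbf{Part (i).} Write $L = cI + M$ where $c\ge 0$ is chosen large enough that $M \defas L + cI$ is (entrywise) nonnegative; this is possible precisely because $L$ is essentially nonnegative, and it does not change the off-diagonal sign pattern. Then $\exp(Lt) = \e^{-ct}\exp(Mt)$, and since $\e^{-ct}>0$ the sign pattern of $\exp(Lt)$ equals that of $\exp(Mt) = \sum_{k\ge 0} (Mt)^k/k!$. Because $M\ge 0$, every term $(Mt)^k/k!$ is nonnegative, so no cancellation occurs in the sum: the $(r,s)$ entry of $\exp(Mt)$ is positive iff $(M^k)_{r,s}>0$ for some $k\ge 0$, i.e. iff there is a walk of some length from $r$ to $s$ in the graph of $M$. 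The graph of $M$ has the same arcs as the graph of $L$ except that it always has all self-loops; adding self-loops does not affect strong connectivity. Hence: the graph of $\exp(Lt)$ is the ``reachability closure'' of the graph of $L$ (with loops), which is strongly connected iff the graph of $L$ is. This gives the equivalence for every $t>0$; conversely, if $\exp(Lt)$ is irreducible for even one $t>0$ the same walk-characterization forces the graph of $L$ to be strongly connected, so in fact ``for every $t>0$'' and ``for some $t>0$'' coincide, and the stated ``iff'' follows.

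\textbf{Part (ii).} Now $L$ is reducible. Apply the same computation to the bordered matrices: the last row and last column of $\left(\begin{smallmatrix} L & p \\ \mathbf{1} & 1\end{smallmatrix}\right)$ and of $\left(\begin{smallmatrix}\exp(Lt) & p \\ \mathbf{1} & 1\end{smallmatrix}\right)$ are identical (entries $\mathbf 1$, $p$, $1$), and by Part (i)'s analysis the upper-left $n\times n$ blocks $L$ and $\exp(Lt)$ have directed graphs with the same strongly connected structure — more precisely, for $r,s\le n$ there is an $r$-$s$ walk in the graph of $\exp(Lt)$ iff there is one in the graph of $L$. Since the vertex $n+1$ has an arc to every $j\le n$ (the row $\mathbf 1$) and from every $i$ with $p_i>0$ (the column $p$), and these arcs are the same for both bordered matrices, the two bordered graphs are strongly connected simultaneously. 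The technical wrinkle is that $\exp(Lt)$ may have \emph{more} arcs among $\{1,\dots,n\}$ than $L$ does (reachability edges), so a priori the bordered exponential could be irreducible while the bordered $L$ is not; one must check the reverse implication does not fail. But adding the reachability edges can only help strong connectivity in one direction — if the bordered $\exp(Lt)$-graph is strongly connected, one replaces each reachability edge $r\to s$ (present because of some $L$-walk $r\to\cdots\to s$) by that walk, obtaining a strong connection already in the bordered $L$-graph. So again the equivalence holds for every $t>0$ (and, as a byproduct, is independent of $t$).

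\textbf{Main obstacle.} The only real subtlety is the ``no cancellation'' step: one must be sure that in $\exp(Mt)=\sum (Mt)^k/k!$ with $M\ge0$ a particular $(r,s)$ entry cannot vanish despite some term being positive — true because all terms are $\ge 0$ — and, conversely, that it cannot become positive without some $(M^k)_{r,s}$ being positive — true because it is a (nonnegative) sum of such. After that, everything is a routine translation into the language of strongly connected components and the cited equivalence \cite[Th.~2.2.7]{BermanPlemmons94}; the bordered case in Part (ii) needs only the observation that the extra last row/column is identical on both sides, so it contributes the same arcs to and from the new vertex.
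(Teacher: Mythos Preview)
Your argument is correct (modulo the harmless sign slip ``$L=cI+M$'' in Part~\ref{l:expLt:L}; you clearly intend $L=M-cI$, and everything that follows is consistent with that reading). Part~\ref{l:expLt:L} is essentially the paper's proof spelled out: the paper also shifts to the nonnegative matrix $M=L+cI$ and appeals to the exponential series, citing \cite{Kato82,BermanPlemmons94} in lieu of your explicit walk interpretation.

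For the reverse implication in Part~\ref{l:expLt:tildeL} the two proofs genuinely differ. The paper invokes Lemma~\ref{l:structure}: assuming both $L$ and the bordered matrix are reducible, a simultaneous permutation puts $L$ into block-lower-triangular form with the nonzero entries of $p$ confined to the lower block; since the matrix exponential preserves block-triangularity, the bordered $\exp(Lt)$ inherits the same reducible pattern. Your route is purely graph-theoretic: having established that reachability within $\intcc{1;n}$ is identical for $L$ and $\exp(Lt)$, and that the arcs incident to vertex $n{+}1$ coincide on both sides, you simply expand each reachability edge of $\exp(Lt)$ into the underlying $L$-walk. This bypasses Lemma~\ref{l:structure} for the present lemma (it is still needed elsewhere, in the proof of Theorem~\ref{t:E}); the paper's approach, in exchange, exhibits the reducible block structure of the bordered exponential explicitly.
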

\begin{proof}
Necessity in \ref{l:expLt:L} follows from
\cite[Rem.~I.7.9]{Kato82} and \cite[Th.~2.2.7]{BermanPlemmons94} 
by establishing the equivalence between the definition of irreducibility in 
\cite[Sec.~I.7.4]{Kato82} and ours.
Sufficiency in \ref{l:expLt:L} follows from
an evaluation of the exponential series.

To verify the necessary condition in \ref{l:expLt:tildeL}, first note that
\begin{equation}
\label{e:l:expLt:1}
(L_{i,j} \neq 0, \ t>0) \implies \exp({Lt})_{i,j} \geq 0
\end{equation}
if $L$ is nonnegative. To see that \ref{e:l:expLt:1} also holds for
essentially nonnegative matrices, choose $c>0$ such that $L + c \id$
is nonnegative. Then \ref{e:l:expLt:1} holds since
$\exp(Lt) = \exp((L + c \id)t) \exp(-ct)$.
Thus, $A=\left ( \begin{smallmatrix} \exp(Lt) & p \\ \mathbf{1} &
    1 \end{smallmatrix} \right )$ arises from $B = \left
  ( \begin{smallmatrix} Lt & p \\ \mathbf{1} & 1 \end{smallmatrix}
\right )$ by adding nonnegative values to the entries of $B$. Consequently, the irreducibility of $B$ passes over to $A$.
\\
To prove sufficiency, assume that both
$L$ and 
$\tilde L = \left ( \begin{smallmatrix} L & p \\ \mathbf{1} & 1 \end{smallmatrix} \right )$
are reducible. 
By Lemma \ref{l:structure} there exist permutation matrices $P$ and $\tilde P$
satisfying the following: $\tilde P \tilde L \tilde P^\top$ is of the form \ref{e:PAPT} and 
$P\exp(L)P^\top = \exp(PLP^\top) = \left ( \begin{smallmatrix} X' & 0 \\ Z' & Y' \end{smallmatrix} \right )$, 
where $X'$, $Y'$, $Z'$ are matrices of the same dimensions as $X$, $Y$, $Z$ in \ref{e:PAPT}, respectively. 
Hence, 
$
\left ( \begin{smallmatrix} \exp(L) & p \\ \mathbf{1} & 1 \end{smallmatrix} \right )
$ 
is reducible by \cite[Th.~2.2.7]{BermanPlemmons94} as it
can be transformed by $\tilde P$ to the form \ref{e:PAPT} with $X'$, $Y'$, $Z'$ in place of $X$, $Y$, $Z$, respectively.
\end{proof}

\begin{proof}[Proof of Theorem \ref{t:optimaleta}]
As \ref{e:l:expLt:1} holds for $L^{(i)}$ in place of $L$,
we may prove the theorem assuming $\exp(L^{(i)}\tau)z$ in place of $L^{(i)}z$ in \ref{e:t:optimaleta:tildeL}.
Thus, by Lemma \ref{l:expLt} 
one of the matrices
in the statement of Theorem \ref{t:g} with 
$A^{(i)}$, $p^{(i)}$ in place of $A$, $p$, respectively, 
is irreducible.
Moreover, $A^{(i)}$ has positive diagonal \cite[Th.~I.7.4]{Kato82}. 
Now apply Theorem \ref{t:g} to $A^{(i)}$ and $p^{(i)}$ in place of $A$ and $p$ to see that $a(x)\defas E_{A^{(i)}, p^{(i)}}(\exp(x))$ satisfies $a''(x)h^2>0$ for all $x \in V_\gamma$ and $h \in V_0 \setminus \{0\}$, and $a(x)\to \infty$ as $\|x\|_\infty \to \infty$, $x \in V_\gamma$. 
Moreover, every summand in $\widetilde{g}$ is convex
and positive. So, as $a$ is a summand in $\widetilde{g}$ the assertions on
$\widetilde{g}$ hold. Hence,
\ref{e:g:op} with $\widetilde{g}$ in place of $g$ possesses a unique solution
\cite[4.3.3]{OrtegaRheinboldt00}, and thus, so does \ref{e:heuristic}.\looseness=-1
\end{proof}
\section{Numerical example}
\label{s:example}
To demonstrate the benefits of the presented results, 
we consider the control system of a double pendulum that is mounted on a cart
as investigated in \cite{GraichenTreuerZeitz07}. The dynamics of this system
can be decomposed into the motion of the two poles and the motion of the cart,
which are coupled by the acceleration of the cart. Here, we consider only 
the motion of the poles. 
Specifically, 
we consider the equations of motion given in
\cite[Tab.~2]{GraichenTreuerZeitz07}, rewritten as a first order
system \ref{e:System:c-time} with $n=4$, $\bar U = \R$, %\soutAW{$w=0$} 
and $x = (\phi_1,\phi_2,\dot \phi_1,\dot \phi_2)$ in the 
notation of \cite{GraichenTreuerZeitz07}. 
Specifically,
$x_1$ and $x_2$ denotes the angle formed by the inner and outer, respectively,
pole and the vertical ray, and $x_3$ and $x_4$ denote
the corresponding angular velocities.
The control input $u$ is the acceleration of the cart. 
See also \cite[Fig.~1]{GraichenTreuerZeitz07}. 
We additionally model uncertainties in friction forces in the links
by virtue of $w=(0,0,0.018,0.028)$ in \ref{e:System:c-time}.

We aim at steering the state of the system from the stable equilibrium point $(\pi,\pi,0,0)$
to an ellipsoid centered at the lower unstable equilibrium point $x_0=(\pi,0,0,0)$ given by
\mbox{$\{ x \in \R^4 \mid (x-x_0)^\top V (x-x_0) \leq 1 \}$} where
\[
V = 
\left(
\begin{smallmatrix}
 0.247 & 0.153 & -0.023 & -0.026 \\
 0.153 & 0.106 & 0.026 & -0.023 \\
 -0.023 & 0.026 & 8.24 & 3.893 \\
 -0.026 & -0.023 & 3.893 & 1.922 \\
\end{smallmatrix}
\right).
\]
We assume measurement errors \ref{e:cSys:meas} with 
$z = (b,b,2b,2b)$, $b=2\pi/2^{14}$ which 
are motivated by $14$-bit quantized measurements of the angles.
Additionally, we require the state $x\in\R^4$ of the system not to leave 
$
\bar X = \intcc{\pi/2,\pi+0.1} \times \intcc{0,2\pi} \times \intcc{-5.7,5.7} \times \intcc{-5.7,5.7}
$
and we identify $x$ and $x+(0,2\pi k,0,0)$ for any $k \in \mathbb{Z}$. 
The latter means that we do not impose restrictions on the outer angle.

We shall solve this control task for
the sampled system $S_1$ associated with \ref{e:System:c-time} 
and sampling time $\tau = 0.01$
using the synthesis procedure outlined 
in Sections \ref{s:intro} and \ref{s:abstractions},
in which we focus on the computation of abstractions.
Using Theorem \ref{t:abstraction}, we will compute two abstractions
$S_2 = (X_2, U_2, F_2)$ and $S_2' = (X'_2, U'_2,F'_2)$ for $S_1$,
where $S_2$ is based on a naive choice of the grid parameter, and the
grid parameter for $S_2'$ is chosen using the results in Section \ref{s:op}. 

We begin with the details to $S_2$. 
We let $U_2$ consist of $5$ elements equally spaced on
$9.81\cdot\intcc{-3.5,3.5}$.
Next, we let $\bar X_2$ in Theorem \ref{t:abstraction} be a cover of $\bar X$
and let $X_2$ be a
grid of the form \ref{e:grid} with grid parameter
\begin{align*}
\eta & =
(\tfrac{\pi/2+0.1}{118},\tfrac{2\pi}{118},\tfrac{11.4}{118},\tfrac{11.4}{118})
\approx
(0.014,0.053,0.097,0.097).
\end{align*}
$\eta$ is a naive choice as 
each component of $\bar X$ is subdivided into $118$ intervals of equal length. 
Then $\bar X_2$ consists of about \mbox{$194\!\cdot\!10^6$} cells.
The transition function $F_2$ is computed according to Theorem \ref{t:abstraction}, 
where the required growth bounds are obtained by methods presented in \cite{i14sym}. 
The computation of $S_2$ requires $206$GB RAM and \mbox{$2.5$h} cpu time.
$S_2$ contains about $55.7\!\cdot\!10^9$ transitions and 
$54.6\!\cdot\!10^9$ transitions have been predicted by means of \ref{e:Estar}.
All computations in this section are run on a single thread of an Intel Xeon E5-2687W
($3.1$ GHz).

\begin{figure}
\centering
\includegraphics[width=\ifCLASSOPTIONonecolumn.49\fi\linewidth]{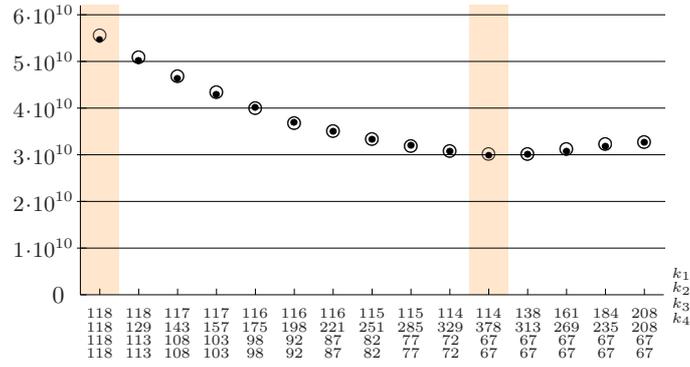}%
\caption{\label{fig:2Pendel}Predicted ($\bullet$) and actual ($\circ$)
  number of transitions in abstractions based on the grid parameters
  $({(\pi/2+0.1)}/{k_1},{2\pi}/{k_2},{11.4}/{k_3},{11.4}/{k_4})$ for the system $S_1$ considered in
  Section \ref{s:example}. 
}
\vspace*{-\baselineskip}%
\end{figure}

In contrast, for $S_2'$ we let $U_2'=U_2$ but choose the grid
parameter as the solution of \ref{e:heuristic} under the
constraint that the cells of $X_2'$ have the same volume as those of
$X_2$ and the
additional constraints $\xi_i \leq 0.17$, $i \in \intcc{1;4}$,
\begin{align*} 
\eta' & =
(\tfrac{\pi/2+0.1}{114},\tfrac{2\pi}{378},\tfrac{11.4}{67},\tfrac{11.4}{67})
\approx
(0.015,0.017,0.17,0.17)
\end{align*}
and define $X_2'$ in the same way as $X_2$, with $\eta '$ in place of $\eta$. 
The computation of $S_2'$ requires $134$GB RAM and 
$73$ min cpu time. $S_2'$ contains about $30.3\!\cdot\!10^9$ transitions 
and the prediction has been $30.0\!\cdot\!10^9$ transitions.

In summary, the number of transitions, computational time and memory
consumption is reduced by ${46\%}$, ${51\%}$ and
${35\%}$, respectively, compared to a naive choice of the
aspect ratio.
Moreover, in contrast to the auxiliary control problem for $S_2$, the
one for $S_2'$ is solvable, due to a reduced number of spurious transitions.

The success of our method to reduce the size of abstractions depends
to a great extend on the accuracy by which the functional in
\ref{e:heuristic} predicts the number of transitions.
That accuracy is illustrated in \ref{fig:2Pendel} for a number of
additional abstractions for $S_1$ with varying grid
parameters. It turns out that the prediction possesses an
error of less than $2\%$.

\bibliographystyle{IEEEtran}
\bibliography{GR/IEEEtranBSTCTL,GR/preambles,GR/mrabbrev,GR/strings,GR/fremde,GR/eigeneCONF,GR/eigeneJOURNALS,GR/eigenePATENT,GR/eigeneREPORTS,GR/eigeneTALKS,GR/eigeneTHESES,tmp}
\ifx\DraftVersion\undefined%
\else%
\newpage
\ifCLASSOPTIONtwocolumn\noindent\onecolumn\large\fi
\include{ReplyToReviewers}
\fi
\end{document}